
\documentclass[10pt]{article}
\usepackage{amsmath,amsthm,amssymb,amsfonts,amscd, array}
\usepackage{cite}
\usepackage{tikz}
\usepackage{url}
\usepackage[unicode,colorlinks]{hyperref}
\newtheorem{theorem}{Theorem}
\newtheorem{lemma}[theorem]{Lemma}

\newtheorem{example}{Example}
\newtheorem{remark}{Remark}


\begin{document}

\centerline{\bf \Large{On left legal semigroups\footnote{Mathematics Subject Classification: 20M10, 20M12\\ Keywords: Semigroup, free semigroup, variety of semigroups}}}
\medskip
\centerline{\bf Attila Nagy\footnote{email: nagyat@math.bme.hu}}
\medskip
\centerline{Department of Algebra}
\centerline{Budapest University of Technology and Economics}
\centerline{M\H uegyetem rkp. 3, Budapest, 1111, Hungary}

\bigskip

\noindent
\centerline{\bf Abstract}

\medskip

In this paper we study semigroups satisfying the identity $aba=ab$.

\section{Introduction}\label{sec1}
Many important combinatorial structures such as real and complex hyperplane arrangements, interval greedoids, matroids and oriented matroids have the structure of a left regular band (see, for example, \cite{Aguiar, Friedman, Margolis, Saliola}). A semigroup $S$ is called a left regular band if every element of $S$ is an idempotent element, and $S$ satisfies the identity $aba=ab$. Left regular bands are examined by many authors. See, for example, papers \cite{Auinger, Billhardt, Branco, Friedman, Saliola, Shevlyakov, Vakhrameev, Wang} and the book \cite{Petrich}. In \cite{Salam, Salam2}, the authors examined semigroups satisfying the identiy $aba=ab$. Using the terminology of \cite{Salam2}, such semigroups are called left legal semigroups. The class $\mathcal{LLS}$ of all left legal semigroups is a variety. Therefore, it is natural to find free semigroups and subdirectly irreducible semigroups in this variety. In Section~\ref{sec3}, we construct free left legal semigroups $F_{\mathcal{LLS}}(X)$ for arbitrary nonempty sets $X$. The subdirectly irreducible left legal semigroups are examined in Section~\ref{sec4}.
We show that a semigroup containing at least two elements is a subdirectly irreducible left legal semigroup satisfying the identity $ab=a^2b$ if and only if it is either a subdirectly irreducible left regular band or a two-element zero semigroup. We also show that a semigroup containing at least two elements is a subdirectly irreducible (left legal) semigroup satisfying the identity $ab=ac$ if and only if it is either a two-element left zero semigroup or a two-element zero semigroup.
The concept of the retract ideal extension of semigroups is used effectively in many cases in the study of the structure of semigroups. In Section~\ref{sec4}, we give a necessary and sufficient condition for a left legal semigroup to be a retract ideal extension of a left regular band by a zero semigroup.
One of the basic concepts of semigroup theory is the semilattice decomposition.
In Section~\ref{sec5}, this concept is in the centre of investigations.
We show that a semigroup $S$ is a semilattice indecomposable left legal semigroup if and only if $S^2$ is a left zero semigroup. The left (resp., right, weakly) separative semigroups are examined in many papers. In Section~\ref{sec6}, these semigroups are the focus. We show that a left legal semigroup is right separative if and only if it is weakly separative. It is also proved that every left legal left separative semigroup is commutative. In Section~\ref{sec7}, the lattice of all left legal semigroup varieties is examined.

\section{Preliminaries}\label{sec2}
By a \emph{semigroup} we mean a multiplicative semigroup, i.e. a nonempty set together with an associative multiplication.
A nonempty subset $I$ of a semigroup $S$ is said to be an \emph{ideal} of $S$ if $sa,as\in I$ for every $a\in I$ and $s\in S$.
If $I$ is an ideal of a semigroup $S$, then the relation $\varrho _I$ on $S$ defined by $(a, b)\in \varrho _I$ if and only if $a=b$ or $a, b\in I$ is a congruence on $S$, which is called the \emph{Rees congruence on $S$ determined by $I$}. The equivalence classes of $S$ mod $\varrho _I$ are $I$ itself and every one-element set $\{ a\}$ with $a\in S\setminus I$. The factor semigroup $S/\varrho_I$ is called the \emph{Rees factor semigroup of $S$ modulo $I$}. We shall write $S/I$ instead of $S/\varrho _I$. We may describe $S/I$ as the result of collapsing $I$ into a single (zero) element, while the elements of $S$ outside of $I$ retain their identity.
A semigroup $S$ containing a zero element $0$ is called a \emph{zero semigroup} if $ab=0$ for every $a, b\in S$. It is obvious that if $S$ is a semigroup, then $S^2$ is an ideal of $S$, and the Rees factor semigroup $S/S^2$ is a zero semigroup.
Let $B$ and $Q$ be disjoint semigroups, $Q$ having a zero element. A semigroup $S$ is called an \emph{ideal extension} of $B$ by $Q$ if $S$ contains $B$ as an ideal, and if the Rees factor semigroup $S/B$ is isomorphic with $Q$.
An ideal $I$ of a semigroup $S$ is called a \emph{retract ideal} if there is a homomorphism of $S$ onto $I$ which leaves the elements of $I$ fixed. Such a homomorphism is called a \emph{retract homomorphism} of $S$ onto $I$. In this case we say that $S$ is a \emph{retract ideal extension} of $I$.

\noindent
We say that a semigroup $S$ is a \emph{subdirect product} of semigroups $S_i$ ($i\in I$) if $S$ is isomorphic to a subsemigroup $T$ of the direct product of semigroups $S_i$ ($i\in I$) such that the restriction of the projection homomorphisms to $T$ are surjective. A semigroup $S$ is said to be \emph{subdirectly irreducible} if whenever $S$ is written as a subdirect product of a family of semigroups $S_i$ ($i\in I$), then, for at least one $j\in I$, the projection homomorphism $\pi _j$ maps $S$ onto $S_j$ isomorphically.
Subdirect decompositions of a semigroup $S$ are closely connected with congruences on $S$ (see, for example, \cite[I.3.6]{Petrich}).
If $\alpha _i$ ($i\in I$) are congruences on a semigroup $S$ and $\cap _{i\in I}\alpha _i=\iota_S$, the equality relation on $S$, then $S$ is a subdirect product of the factor semigroups $S/\alpha _i$. Conversely,
if a semigroup is a subdirect product of semigroups $S_i$ ($i\in I$) and $\alpha _i$ is the congruence on $S$ induced by the projection homomorphism $\pi _i$ ($i\in I$), then $\cap _{i\in I}\alpha _i=\iota_S$.

\noindent
If $\cal C$ is a class of semigroups, then a congruence $\varrho$ on a semigroup $S$ is called a $\cal C$-\emph{congruence} if the factor semigroup $S/\varrho$ belongs to $\cal C$.
An element $e$ of a semigroup is called an \emph{idempotent element} if $e^2=e$.
A semigroup in which every element is an idempotent element is called a \emph{band}.
A commutative band is called a \emph{semilattice}. It is clear that the universal relation on an arbitrary semigroup is a semilattice congruence. A semigroup $S$ is said to be \emph{semilattice indecomposable} if the universal relation is the only semilattice congruence on $S$. By Theorem of \cite{Tamura}, every semigroup $S$ has a least semilattice congruence $\eta _S$ whose classes are semilattice indecomposable semigroups. In other words, every semigroup is a semilattice of semilattice indecomposable semigroups.
A band satisfying the identity $aba=ab$ is called a \emph{left regular band}.
A semigroup satisfying the identity $ab=a$ is called a \emph{left zero semigroup}. By the dual of \cite[II.3.12. Proposition]{Petrich}, a band is left regular if and only if it is a semilattice of left zero semigroups. It is clear that every left zero semigroup is left legal and semilattice indecomposable.

\noindent
A semigroup $S$ is called a left legal semigroup if it satisfies the identity $aba=ab$. Every left regular band is a left legal semigroup. The converse is also true for semigroups $S$ containing a right identity element, because $a^2=aea=ae=a$ is satisfied for every element $a$ of $S$ and a right identity element $e$ of $S$.

\noindent
A semigroup $S$ is called an \emph{$\cal L$-commutative semigroup} \cite{NagyRC} if, for every elements $a, b\in S$, there is an element $x\in S^1$ such that $ab=xba$. A semigroup $S$ is called a \emph{right weakly commutative semigroup} if, for every $a, b\in S$, there exists $x\in S$ and a positive integer $n$ such that $(ab)^n=xa$.
It is clear that every $\cal L$-commutative semigroup is right weakly commutative.

\noindent
A semigroup $S$ is said to be a \emph{right separative semigroup} if $ab=b^2$ and $ba=a^2$ imply $a=b$ for every $a, b\in S$. A \emph{left separative semigroup} is defined analogously. A semigroup $S$ is called a \emph{weakly separative semigroup} if $a^2=ab=b^2$ implies $a=b$ for every $a, b\in S$.

\noindent
For an arbitrary semigroup $S$,
\[S^1=\begin{cases} S, &\textit{if $S$ has an identity element};\\
S\cup 1, &\textit{otherwise},\end{cases}\]
where $S\cup 1$ is the semigroup which is obtained by the adjunction of an identity element $1$ to $S$.

\noindent
For notions and notations not defined but used in this paper, we refer the reader to \cite{Clifford1}.

\section{Free left legal semigroups}\label{sec3}

Let $\mathcal V$ be a non-trivial variety of algebras, i.e. $\mathcal{V}$ contains algebras with more than one element. An algebra $F_{\mathcal {V}}(X)\in \mathcal{V}$ is said to be a free $\mathcal V$-algebra on the set $X$ if $X$ generates $F_{\mathcal{V}}(X)$, and every mapping of $X$ into any algebra $A$ from $\mathcal V$ can be extended to a homomorphism of $F_{\mathcal{V}}(X)$ into $A$. In this section we construct free left legal semigroups $F_{\mathcal{LLS}}(X)$ for arbitrary nonempty sets $X$.

\medskip

\noindent
First we prove two lemmas which will be used throughout the paper.

\begin{lemma}\label{lem0} If $S$ is a left legal semigroup, then every element of $S^2$ is an idempotent element. Moreover, $a^k=a^2$ for every $a\in S$ and every integer $k\geq 2$.
\end{lemma}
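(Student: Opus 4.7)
The plan is to use the defining identity $xyx=xy$ in two slightly different substitutions.

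First I would handle the idempotency of $S^2$. A generic element of $S^2$ has the form $ab$ with $a,b\in S$, so the goal is to show $(ab)^2=ab$, i.e.\ $abab=ab$. The key observation is that the identity $xyx=xy$, applied with $x=b$ and $y=a$, yields $bab=ba$. Substituting this into $abab=a(bab)$ gives $abab=a(ba)=aba=ab$, where the last step is the identity again with $x=a$, $y=b$.

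For the second part, I would prove $a^k=a^2$ by induction on $k\geq 2$. The base case $k=2$ is trivial, and the case $k=3$ is just the identity $xyx=xy$ with $x=y=a$, giving $a^3=aaa=aa=a^2$. For the inductive step, assuming $a^k=a^2$, one has $a^{k+1}=a\cdot a^k=a\cdot a^2=a^3=a^2$, closing the induction.

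I do not expect any real obstacle: both parts follow by very short direct manipulations with the single defining identity, and the only mildly non-obvious move is spotting that one should first rewrite the inner factor $bab$ as $ba$ before collapsing $abab$.
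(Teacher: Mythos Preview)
Your proof is correct and follows essentially the same route as the paper: both compute $(ab)^2=a(bab)=a(ba)=aba=ab$ via two applications of the identity, and both reduce the ``moreover'' clause to $a^3=a^2$ (the paper just states this without writing out the induction you spell out). There is no meaningful difference in approach.
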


\begin{proof} Let $S$ be a left legal semigroup. For every elements $a$ and $b$ of $S$, we have
\[(ab)^2=a(bab)=aba=ab.\] Thus every element of $S^2$ is an idempotent element. For every $a\in S$, $a^3=a^2$ by definition, and hence $a^k=a^2$ for every integer $k\geq 2$.
\end{proof}

\begin{lemma}\label{lem3} For arbitrary elements $a$ and $b$ of a left legal semigroup, $ab=ab^2$.
\end{lemma}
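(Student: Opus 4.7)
The plan is to derive $ab = ab^2$ by combining the defining identity $aba = ab$ with the idempotency of products already established in Lemma \ref{lem0}. Specifically, Lemma \ref{lem0} tells us that every element of $S^2$ is idempotent, and since $ab \in S^2$, we have $(ab)^2 = abab = ab$.

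Concretely, I would start from the left legal identity $aba = ab$ and right-multiply both sides by $b$ to obtain $abab = ab^2$. At this point the left-hand side is simply $(ab)^2$, which equals $ab$ by the idempotency provided by Lemma \ref{lem0}. Chaining the two equalities together gives $ab = abab = ab^2$, as required.

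I do not anticipate any real obstacle here: the entire argument is a two-step manipulation once one notices that right-multiplying $aba = ab$ by $b$ produces the square $(ab)^2$ on the left, which can then be collapsed via the preceding lemma. The only minor subtlety worth flagging is that we do not need $b$ itself to be idempotent (and indeed $b$ need not lie in $S^2$); it suffices that the composite $ab$ lies in $S^2$.
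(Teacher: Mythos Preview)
Your proof is correct and essentially identical to the paper's own argument: the paper writes $ab=(ab)^2=(aba)b=ab^2$, using Lemma~\ref{lem0} for the first equality and the left legal identity for the second, which is exactly your two-step manipulation in a slightly different order.
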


\begin{proof} Using Lemma~\ref{lem0}, we get $ab=(ab)^2=(aba)b=ab^2$.
\end{proof}

\medskip

\noindent
Let $X$ be a nonempty set, whose elements are also said to be letters. A finite sequence of letters from $X$ is said to be a word. It is known \cite{Vakhrameev} that the free left regular band on $X$ consists of all words of the letters from $X$ which have no duplicate letters, and the multiplication $\cdot$ is defined by $\omega_1 \cdot \omega_2=\omega_1\exists_{\omega_1}(\omega_2)$, where $\exists_{\omega_1}(\omega_2)$ means that we remove from $\omega_2$ all letters contained in $\omega_1$.
As we will see below, the elements of the free left legal semigroup differ only in a small detail from the elements of the free left regular band. Before constructing the free left legal semigroup, we note that Lemma~\ref{lem0} and Lemma~\ref{lem3} imply that
every product of elements of a left legal semigroup can be reduced to a product of the form $a^k_1a_2\cdots a_n$, equal to the original product, in which the elements $a_1\dots a_n$ are pairwise distinct and $k=1$ or $k=2$.
Accordingly, for a nonempty set $X$, let $F_{\mathcal{LLS}}(X)$ denote the set of all nonempty words of the letters from $X$ in which each letter $x\in X$ can occur at most once or twice, in the latter case in the form $xx$ at the beginning of the word.
For an arbitrary word $\omega \in F_{\mathcal{LLS}}(X)$, let $\omega ^*=\omega$, if the letters of $\omega$ are pairwise distinct, and let $\omega ^*$ be the word which can be obtained from $\omega$ by deleting one of its first two letters if they are equal to each other.
We define an operation $\circ$ on $F_{\mathcal{LLS}}(X)$: for arbitrary words $\omega _1=x_1 \dots x_n$ and $\omega _2=y_1 \dots y_m$ of $F_{\mathcal{LLS}}(X)$, let
\begin{equation}\label{operation}
\omega _1\circ \omega _2=\begin{cases}
xx\exists_{\omega_1}(\omega ^*_2), & \textit{if $\omega _1=x=y_1$ for some $x\in X$,}\\
\omega _1\exists _{\omega _1}(\omega^* _2), & \textit{otherwise.}\end{cases}
\end{equation}

\begin{example}\label{ex1}
If $X=\{ x, y\}$, then
$F_{\mathcal{LLS}}(X)=\{x, xx, y, yy, xy, xxy, yx, yyx\}$. The operation $\circ$ on $F_{\mathcal{LLS}}(X)$ is given by Table~\ref{Table 1}.

\medskip

\begin{table}[htbp]
\begin{center}
\begin{tabular}{l|l l l l l l l l}
 $\circ$ &$x$&$xx$&$y$&$yy$&$xy$&$xxy$&$yx$&$yyx$\\
\hline
$x$&$xx$&$xx$&$xy$&$xy$&$xxy$&$xxy$&$xy$&$xy$\\
$xx$&$xx$&$xx$&$xxy$&$xxy$&$xxy$&$xxy$&$xxy$&$xxy$\\
$y$&$yx$&$yx$&$yy$&$yy$&$yx$&$yx$&$yyx$&$yyx$\\
$yy$&$yyx$&$yyx$&$yy$&$yy$&$yyx$&$yyx$&$yyx$&$yyx$\\
$xy$&$xy$&$xy$&$xy$&$xy$&$xy$&$xy$&$xy$&$xy$\\
$xxy$&$xxy$&$xxy$&$xxy$&$xxy$&$xxy$&$xxy$&$xxy$&$xxy$\\
$yx$&$yx$&$yx$&$yx$&$yx$&$yx$&$yx$&$yx$&$yx$\\
$yyx$&$yyx$&$yyx$&$yyx$&$yyx$&$yyx$&$yyx$&$yyx$&$yyx$
\end{tabular}
\caption[]{}\label{Table 1}
\end{center}
\end{table}
\end{example}

\begin{theorem}\label{thmleftlegal} If $X$ is an arbitrary nonempty set, then $(F_{\mathcal{LLS}}(X); \circ)$ is the free left legal semigroup on the set $X$.
\end{theorem}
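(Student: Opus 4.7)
The strategy is to verify in turn (i) associativity of $\circ$, (ii) the identity $aba = ab$ on $F_{\mathcal{LLS}}(X)$, (iii) generation of $F_{\mathcal{LLS}}(X)$ by $X$, and (iv) the universal property for maps into left legal semigroups. My approach reduces (i) and (ii) to facts about the free semigroup modulo the left legal congruence. Let $X^+$ denote the free semigroup on $X$ and let $\sim$ denote the fully invariant congruence on $X^+$ generated by the pair $aba = ab$; by the general theory of varieties, $X^+/{\sim}$ is the free left legal semigroup on $X$. Since each element of $F_{\mathcal{LLS}}(X)$ is by definition a word in $X^+$, the theorem will follow once I show that $\omega_1 \omega_2 \sim \omega_1 \circ \omega_2$ for all $\omega_1, \omega_2 \in F_{\mathcal{LLS}}(X)$ (with $\omega_1 \omega_2$ denoting concatenation in $X^+$) and that distinct elements of $F_{\mathcal{LLS}}(X)$ lie in distinct $\sim$-classes.

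The central calculation is $\omega_1 \omega_2 \sim \omega_1 \circ \omega_2$, performed by unwinding the case split of (\ref{operation}). The replacement of $\omega_2$ by $\omega_2^*$ is justified by Lemma~\ref{lem3}: if $\omega_2$ has a doubled initial letter $y_1$, then $\omega_1 y_1 y_1 \cdots \sim \omega_1 y_1 \cdots$, because $\omega_1 y_1 \sim \omega_1 y_1 y_1$. In the special case $\omega_1 = x = y_1$ the raw concatenation may begin with $xxx$, which Lemma~\ref{lem0} collapses to $xx$, producing the doubled head in the first clause of (\ref{operation}). Every remaining letter of $\omega_2^*$ that already appears in $\omega_1$ is then deleted via the derived identity $u a v a w \sim u a v w$, obtained by substituting $b = v$ into $aba = ab$ and multiplying on the left by $u$ and on the right by $w$. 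The word that survives is precisely $\omega_1 \circ \omega_2$. Associativity of $\circ$ and the identity $aba = ab$ on $F_{\mathcal{LLS}}(X)$ then transfer from the corresponding properties of concatenation and $\sim$ in $X^+$.

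The remaining steps are lighter. A short induction on length using (\ref{operation}) shows that $X$ generates $F_{\mathcal{LLS}}(X)$; writing $\omega = x_1^{k_1} x_2 \cdots x_n$ with $k_1 \in \{1, 2\}$, one has $\omega = x_1 \circ x_2 \circ \cdots \circ x_n$ if $k_1 = 1$ and $\omega = x_1 \circ x_1 \circ x_2 \circ \cdots \circ x_n$ if $k_1 = 2$. Given $f : X \to S$ into a left legal semigroup $S$, I define $\bar{f}(x_1^{k_1} x_2 \cdots x_n) = f(x_1)^{k_1} f(x_2) \cdots f(x_n)$ computed in $S$; uniqueness of this extension is immediate from generation, and the homomorphism property $\bar{f}(\omega_1 \circ \omega_2) = \bar{f}(\omega_1) \bar{f}(\omega_2)$ is proved by replaying in $S$ the reductions from the previous paragraph, which are valid there because $S$ itself satisfies $aba = ab$ and, by Lemmas~\ref{lem0} and~\ref{lem3}, $a^3 = a^2$ and $ab = ab^2$. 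The same construction, with $S$ specialised to separate a given pair of distinct normal forms, supplies the required injectivity. I expect the main obstacle to be the case analysis in $\omega_1 \omega_2 \sim \omega_1 \circ \omega_2$: one must verify that the two clauses of (\ref{operation}) cover every concatenation correctly and that no further $\sim$-reduction is applicable to the output. The exceptional case $\omega_1 = x = y_1$ is the most delicate, since the head-doubling reduction $xxx \sim xx$ and the deletions of later occurrences of $x$ in $\omega_2$ must be tracked simultaneously.
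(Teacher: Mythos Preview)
Your overall strategy—identifying $(F_{\mathcal{LLS}}(X),\circ)$ with $X^+/{\sim}$ by showing $\omega_1\omega_2\sim\omega_1\circ\omega_2$—is exactly the paper's route (your reduction is its equation~(\ref{third})), and your treatment of that reduction and of generation by $X$ is fine. The genuine gap is the injectivity step: you assert that ``the same construction, with $S$ specialised to separate a given pair of distinct normal forms, supplies the required injectivity'', but you never say which $S$ to use, and this is precisely the substantive content of the theorem. You cannot take $S=(F_{\mathcal{LLS}}(X),\circ)$ itself, because associativity of $\circ$ and the identity $aba=ab$ only \emph{transfer} from $X^+/{\sim}$ once injectivity is already known; so the argument would be circular. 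The free left regular band separates normal forms that differ in their ordered letter-sequence, but it cannot distinguish $x_1x_2\cdots x_n$ from $x_1x_1x_2\cdots x_n$, since it satisfies $ab=a^2b$. For that case you need an explicit left legal semigroup in which $ab\neq a^2b$ (the paper's Example~\ref{ex3a} is such a semigroup), and you would still have to argue that mapping into it actually separates the two words after the remaining letters are appended.

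The paper sidesteps this difficulty by a rewriting/invariant argument: it observes that each of the length-decreasing rules $uvu\to uv$, $u^3\to u^2$, $uv^2\to uv$ preserves both the two-letter prefix $x_1x_2$ and the ordered content $v(\omega)$, so two words are $\theta_I$-equivalent iff these invariants agree, and the normal form is then read off directly from them via~(\ref{normalforms}). That replaces your unspecified ``separation'' by a concrete syntactic invariant, and it is what your sketch is missing. Incidentally, your misidentification of the main obstacle (you flag the case analysis in $\omega_1\omega_2\sim\omega_1\circ\omega_2$ as the hard part) is a symptom of this: that computation is routine, whereas the uniqueness of normal forms is where the work lies.
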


\begin{proof} Let $X$ be a nonempty set and $F_X$ be the free semigroup on $X$, i.e. the set of all arbitrary nonempty words of letters from $X$.
Let $I$ denote the set of identities $uvu=uv$, $u^3=u^2$, and $uv^2=uv$. By Lemma~\ref{lem0} and Lemma~\ref{lem3}, a semigroup is a left legal semigroup if and only if it satisfies the identities belonging to $I$.
By \cite[Section V.]{Evans}, the free left legal semigroup is obtained as a quotient semigroup $F_X/\theta_I$, where the congruence $\theta_I$ is constructed on $F_X$ by defining two words to be $\theta_I$-congruent if they are equal or we can transform one into the other by a finite sequence of $I$-transformations, i.e. applications of the identities $uvu=uv$, $u^3=u^2$, and $uv^2=uv$.
The factor semigroup $F_X/\theta_I$ belongs to the variety $\mathcal{LLS}$ of all left legal semigroups and freely generated in $\mathcal{LLS}$ by the elements $[x]$ ($x\in X$), where $[x]$ denotes the $\theta_I$-class of $F_X$ containing the letter $x$.
We show that the groupoid $(F_{\mathcal{LLS}}(X); \circ)$ is an isomorphic image of the factor semigroup $F_X/\theta_I$, from which the statement of the theorem follows.
Let $(F_X, \rightarrow )$ be a length reducing rewrite system where $\rightarrow$ sends a word $\omega\in F_X$ to a shorter word of $F_X$ which can be obtained from $\omega$ by applying one of the identities in $I$. For example, $\rightarrow$ send the word $xxxyz$ to the word $xxyz$ by applying the identity $u^3=u^2$.
A word $\omega\in F_X$ is said to be a normal form if there is no $\hat{\omega}\in F_X$ with $\omega \rightarrow \hat{\omega}$. Let $\xrightarrow{*}$ is the reflexive-transitive closure of $\rightarrow$, that is $\omega\xrightarrow{*} \tilde{\omega}$ means that $\omega=\tilde{\omega}$ or there are words $\omega_1, \dots , \omega_n\in F_X$ such that $\omega=\omega_1\rightarrow \omega_2\rightarrow \dots \rightarrow \omega_n=\tilde{\omega}$.
A word $\omega'\in F_X$ is said to be a normal form of a word $\omega\in F_X$ if $\omega'$ is a normal form and $\omega \xrightarrow{*} \omega'$. For a word $\omega\in F_X$, let $v(\omega)=z_1z_2\dots z_k$ be the word of $F_X$ which satisfies the following conditions: $\{z_1, z_2, \dots , z_k\}$ is the set of all letters from $X$ belonging to $\omega$, and $z_{i-1}$ appears in $\omega$ sooner than $z_i$ ($i=2, \dots , k$). Obviously, $z_1$ equals the first letter of $\omega$ for arbitrary $\omega\in F_X$. If the first two letters of a word $\omega\in F_X$ of length at least two are not equal, then $z_2$ equals the second letter of $\omega$. It is easy to see that every word $\omega=x_1x_2\dots x_n$ has one and only one normal form, denoted by $\omega'$: if $n=1$ then $\omega'=\omega$; if $n\geq 2$, then $\omega'$
can be obtained by the help of $v(\omega)=z_1z_2\dots z_k$ as follows
\begin{equation}\label{normalforms}
\omega'=\begin{cases} x_1z_2\dots z_k, &\textit{if $x_1\neq x_2$}\\
x_1x_1z_2\dots z_k, &\textit{if $x_1=x_2$.}\end{cases}
\end{equation}
From this result we get that the set of all normal forms of $F_X$ equals $F_{\mathcal{LLS}}(X)$.
From (\ref{normalforms}) it follows that $\omega'_1=\omega'_2$ is satisfied for words $\omega_1=x_1x_2\dots x_n$ and $\omega_2=y_1y_2\dots y_m$ ($n, m\geq 2$) of $F_X$ if and only if the prefixes $x_1x_2$ and $y_1y_2$ are equal, and $v(\omega_1)=v(\omega_2)$.
It is also true that words $\omega_1=x_1x_2\dots x_n$ and $\omega_2=y_1y_2\dots y_m$ ($n, m\geq 2$) are $\theta_I$-congruent if and only if both of $x_1x_2=y_1y_2$ and $v(\omega_1)=v(\omega_2)$ are satisfied. Thus every $\theta_I$-class of $F_X$ contains one and only one normal form, also considering that $[x]=\{ x\}$ for every $x\in X$. Since $F_{\mathcal{LLS}}(X)$ is the set of all normal forms, $\Phi : [\omega]\to \omega'$ is a bijective mapping of the factor set $F_X/\theta_I$ onto the set $F_{\mathcal{LLS}}(X)$, where $\omega'$ denotes the normal form of $\omega$.
It is easy to see that, for arbitrary words $\omega_1$ and $\omega_2=y_1\dots y_m$ of $F_X$,
\begin{equation}\label{third}
(\omega_1\omega_2)'=\begin{cases}xx\omega'_1\exists_{\omega'_1}((\omega'_2)^*), &\textit{if $\omega_1=x=y_1$ for some $x\in X$,}\\
\omega'_1\exists_{\omega'_1}((\omega'_2)^*), &\textit{otherwise.}\end{cases}
\end{equation}
Using (\ref{operation}) and (\ref{third}), we have $(\omega_1\omega_2)'=\omega'_1\circ \omega'_2$. Then
\[\Phi([\omega_1]\bullet [\omega_2])=\Phi([\omega_1\omega_2])=(\omega_1\omega_2)'=
\omega'_1\circ \omega'_2=\Phi([\omega_1])\circ \Phi([\omega_2]).\]
Thus $\Phi$ is a homomorphism. Consequently $\Phi$ is an isomorphism of the semigroup $F_X/\theta_I$ onto the groupoid $(F_{\mathcal{LLS}}(X); \circ )$. Hence $(F_{\mathcal{LLS}}(X); \circ )$ is a semigroup, which is isomorphic to the free left legal semigroup $(F_X/\theta_I; \bullet )$. The set $X$ is the free generating system of $(F_{\mathcal{LLS}}(X); \circ )$.
\end{proof}

\section{Retract ideal extensions of left regular bands by zero semigroups}\label{sec4}

By Lemma~\ref{lem0}, if $S$ is a left legal semigroup, then every element of $S^2$ is an idempotent element. Thus every left legal semigroup is an ideal extension of a left regular band by a zero semigroup. In this section we focus on the case when this extension is retract. We note that if a semigroup $S$ is an ideal extension of a band $B$ by a zero semigroup, then $B=S^2$, because $B\subseteq S^2$ and the assumption that the Rees factor semigroup $S/B$ is a zero semigroup implies $S^2\subseteq B$.

\begin{theorem}\label{thm1} The following three conditions on a semigroup $S$ are equivalent.
\begin{itemize}
\item[(i)] $S$ is a retract ideal extension of a left regular band by a zero semigroup.
\item[(ii)] $S$ is a subdirect product of a left regular band and a zero semigroup.
\item[(iii)] $S$ is a left legal semigroup satisfying the identity $ab=a^2b$.
\end{itemize}
\end{theorem}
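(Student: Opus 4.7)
The plan is to establish the cyclic implication (iii) $\Rightarrow$ (i) $\Rightarrow$ (ii) $\Rightarrow$ (iii).

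For the implication (iii) $\Rightarrow$ (i), I would take $B=S^2$. By Lemma~\ref{lem0}, $B$ consists of idempotents, and since the identity $aba=ab$ holds globally in $S$, it holds in $B$, so $B$ is a left regular band. The Rees factor $S/B$ is a zero semigroup by the standard fact recalled before the theorem, so $S$ is an ideal extension of $B$ by a zero semigroup. The candidate retraction is $\varphi: S\to B$ defined by $\varphi(a)=a^2$. It maps into $B=S^2$ and fixes $B$ pointwise because elements of $B$ are idempotent (Lemma~\ref{lem0}). The only thing to check is that $\varphi$ is a homomorphism: one computes $\varphi(ab)=(ab)^2=ab$ by Lemma~\ref{lem0}, while $\varphi(a)\varphi(b)=a^2b^2=a^2b=ab$ by Lemma~\ref{lem3} together with the hypothesis $ab=a^2b$.

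For (i) $\Rightarrow$ (ii), let $\varphi:S\to B$ be the retract homomorphism from (i), where $B=S^2$ is the left regular band, and let $\pi:S\to S/B$ be the Rees projection onto the zero semigroup $Q=S/B$. I would then consider the map $\Psi:S\to B\times Q$ defined by $\Psi(a)=(\varphi(a),\pi(a))$. This is a homomorphism, and the two component maps are surjective (as $\varphi$ is onto $B$ by retraction and $\pi$ is onto $Q$). The injectivity of $\Psi$ follows by a short case distinction: if $\pi(a)=\pi(b)$ and both lie outside $B$, then they equal the same singleton $\pi$-class, so $a=b$; if both lie in $B$, then since $\varphi$ fixes $B$ pointwise, $\varphi(a)=\varphi(b)$ yields $a=b$ directly.

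The implication (ii) $\Rightarrow$ (iii) is the routine direction: both identities $aba=ab$ and $ab=a^2b$ are satisfied in every left regular band (where $a^2=a$) and in every zero semigroup (where all products equal $0$), and identities are inherited by subsemigroups of direct products, hence by any subdirect product.

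I do not anticipate a serious obstacle: the only non-bookkeeping step is verifying that $\varphi(a)=a^2$ is multiplicative in (iii) $\Rightarrow$ (i), and this is a two-line computation once Lemmas~\ref{lem0} and \ref{lem3} and the hypothesis $ab=a^2b$ are invoked. The identity $ab=a^2b$ is precisely what is needed to promote the weaker information from Lemma~\ref{lem3} (namely $ab=ab^2$) into the full multiplicativity statement $a^2b^2=ab$, and this is exactly why that identity appears in (iii).
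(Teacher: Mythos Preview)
Your proof is correct and follows essentially the same route as the paper: the implications are proved in the same direction (the paper does $(i)\Rightarrow(ii)\Rightarrow(iii)\Rightarrow(i)$), the retraction in $(iii)\Rightarrow(i)$ is the same map $\varphi(a)=a^2$ verified via the same computation, and your explicit injectivity check for $\Psi=(\varphi,\pi)$ in $(i)\Rightarrow(ii)$ is just an unpacking of the paper's one-line observation that $\ker\varphi\cap\varrho_B=\iota_S$ (citing \cite[I.3.6]{Petrich}). The identification $B=S^2$ you use in $(i)\Rightarrow(ii)$ is justified by the remark immediately preceding the theorem.
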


\begin{proof}

\noindent
$(i)\Rightarrow (ii)$: Assume that $S$ is a retract ideal extension of a left regular band $B$ by a zero semigroup. Then $B=S^2$ is a retract ideal of $S$.
Let $\varphi$ be a retract homomorphism of $S$ onto $B$. It is easy to see that\[ker_{\varphi}\cap \varrho _{B}=\iota _S,\] where $ker_{\varphi}$ denotes the kernel of $\varphi$, and $\varrho _{B}$ denotes the Rees congruence on $S$ modulo the ideal $B$. Then, by \cite[I.3.6]{Petrich}, $S$ is a subdirect product of a left regular band and a zero semigroup, because the factor semigroup $S/ker_{\varphi}$ is isomorphic to the left regular band $B$, and the Rees factor semigroup $S/B$ is a zero semigroup.

\medskip

\noindent
$(ii)\Rightarrow (iii)$: It is obvious, because every subsemigroup of the direct product of a left regular band and a zero semigroup is a left legal semigroup satisfying the identity $ab=a^2b$.

\medskip

\noindent
$(iii)\Rightarrow (i)$:  Assume that $S$ is a left legal semigroup satisfying the identity $ab=a^2b$. By Lemma~\ref{lem0},
$S$ is an ideal extension of the left regular band $S^2$ by the zero semigroup $S/S^2$. Let $\varphi$ be the mapping of $S$ into $S^2$ defined by the following way: for an arbitrary $a\in S$, $\varphi (a)=a^2$. As $S^2$ is a band, $\varphi$ maps $S$ onto $S^2$ and leaves the elements of $S^2$ fixed. Let $a, b\in S$ be arbitrary elements. Using Lemma~\ref{lem0}, Lemma~\ref{lem3} and the assumption that $S$ satisfies the identity $ab=a^2b$, we have
\[\varphi(a)\varphi(b)=a^2b^2=a^2b=ab=(ab)^2=\varphi(ab).\]
Then $\varphi$ is a retract homomorphism of $S$ onto $S^2$. Consequently $S$ is a retract ideal extension of the left regular band $S^2$ by the zero semigroup $S/S^2$.
\end{proof}

\begin{example}\label{ex2}

On the set $S=\{x, e, f, g, h\}$, consider the operation defined by Table~\ref{Table 3}.

\begin{table}[htbp]
\begin{center}
\begin{tabular}{l|l l l l l}
 &$x$&$e$&$f$&$g$&$h$\\
\hline
$x$&$e$&$e$&$e$&$g$&$h$\\
$e$&$e$&$e$&$e$&$g$&$h$\\
$f$&$f$&$f$&$f$&$g$&$h$\\
$g$&$g$&$g$&$g$&$g$&$g$\\
$h$&$h$&$h$&$h$&$h$&$h$
\end{tabular}
\caption[]{}\label{Table 3}
\end{center}
\end{table}

\medskip

\noindent
It is a matter of checking to see that this operation is associative. $S^2=\{ e, f, g, h\}$ is a semilattice of the left zero semigroups $S_1=\{ e, f\}$ and $S_2=\{ g, h\}$, and hence $S^2$ is a left regular band. The mapping $\varphi : s\mapsto s^2$ is a retract homomorphism of $S$ onto $S^2$.
Then, by Theorem~\ref{thm1},
$S$ is a subdirect product of the left regular band $S^2$ and the two-element zero semigroup $S/S^2=\{x, 0\}$.
\end{example}

\medskip

\noindent
In the free left legal semigroup $F_{\mathcal{LLS}}(X)$ considered in Example~\ref{ex1}, $xy\neq xxy$. Thus not every left legal semigroup satisfies the conditions in Theorem~\ref{thm1}.

\begin{theorem} A semigroup containing at least two elements is a subdirectly irreducible left legal semigroup satisfying the identity $ab=a^2b$ if and only if it is either a subdirectly irreducible left regular band or a two-element zero semigroup.
\end{theorem}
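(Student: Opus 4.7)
The plan is to reduce the forward direction to Theorem~\ref{thm1}, which tells us that every left legal semigroup $S$ satisfying $ab=a^2b$ is already a subdirect product of the left regular band $S^2$ with the zero semigroup $S/S^2$, via the two congruences $\ker(\varphi)$ (for the retract $\varphi(a)=a^2$) and the Rees congruence $\varrho_{S^2}$, whose intersection is $\iota_S$. Subdirect irreducibility of $S$ must then force at least one of these two congruences to equal $\iota_S$, which immediately splits the argument into the two cases appearing in the statement.

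For the easy direction ($\Leftarrow$), I would just verify that both candidate semigroups satisfy the conditions. A subdirectly irreducible left regular band is by definition subdirectly irreducible, is left legal, and satisfies $ab=a^2b$ because $a^2=a$. The two-element zero semigroup $\{0,a\}$ has only the identity and universal congruences, so it is subdirectly irreducible; the identities $aba=ab$ and $a^2b=ab$ both reduce to $0=0$.

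For the hard direction ($\Rightarrow$), let $S$ be a subdirectly irreducible left legal semigroup with $|S|\geq 2$ satisfying $ab=a^2b$. By Theorem~\ref{thm1} (and the explicit congruences found in its proof), $\ker(\varphi)\cap\varrho_{S^2}=\iota_S$, so by subdirect irreducibility either $\ker(\varphi)=\iota_S$ or $\varrho_{S^2}=\iota_S$. In the first case the retract $\varphi\colon S\to S^2$ is injective, so $S$ is isomorphic to the left regular band $S^2$; being subdirectly irreducible, $S$ is then a subdirectly irreducible left regular band. In the second case $|S^2|=1$, so writing $S^2=\{e\}$ we get $ab=e$ for all $a,b\in S$, which makes $S$ a zero semigroup with zero $e$.

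It remains to show that in the second case $|S|=2$, and this is the only step requiring a small argument. Suppose, for contradiction, that $S=\{e,a_1,\dots,a_{n-1}\}$ is a zero semigroup with $n\geq 3$. Let $T=\{0,1\}$ be the two-element zero semigroup, and for each $i=1,\dots,n-1$ define $\pi_i\colon S\to T$ by $\pi_i(a_i)=1$ and $\pi_i(x)=0$ otherwise. Since all products in both $S$ and $T$ equal the respective zero, each $\pi_i$ is a homomorphism, and clearly $\bigcap_{i=1}^{n-1}\ker(\pi_i)=\iota_S$. Thus $S$ is a subdirect product of the $T$'s, but no $\pi_i$ is an isomorphism (as $|S|>|T|$), contradicting subdirect irreducibility. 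Hence $|S|=2$, completing the proof. The main obstacle, modest as it is, lies in this last zero-semigroup argument; everything else is a direct application of Theorem~\ref{thm1}.
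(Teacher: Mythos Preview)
Your proof is correct and follows essentially the same route as the paper: both invoke Theorem~\ref{thm1} to express $S$ as a subdirect product of the left regular band $S^2$ and the zero semigroup $S/S^2$, then use subdirect irreducibility to force $S$ to coincide with one of the factors, and finally appeal to the fact that a zero semigroup is subdirectly irreducible only when it has two elements. The one small point to tidy up is that your argument for this last fact is written only for finite $S$; replace the list $a_1,\dots,a_{n-1}$ by the family $(\pi_a)_{a\in S\setminus\{e\}}$ indexed by all nonzero elements, and the same computation shows $\bigcap_a\ker(\pi_a)=\iota_S$ with no $\pi_a$ injective whenever $|S|\geq 3$.
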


\begin{proof} A left regular band is a left legal semigroup and satisfies the identity $ab=a^2b$. It is obvious that a two-element zero semigroup is a subdirectly irreducible left legal semigroup satisfying the identity $ab=a^2b$.
To prove the converse assertion, assume that $S$ is a subdirectly irreducible left legal semigroup which has at least two elements and satisfies the identity $ab=a^2b$. Then, by $(ii)$ of Theorem~\ref{thm1}, $S$ is either a subdirectly irreducible left regular band or a subdirectly irreducible zero semigroup $Z$. Since a zero semigroup is subdirectly irreducible if and only if it has at most two elements, we have $\mid Z\mid=2$.
\end{proof}

\noindent
For subdirectly irreducible left regular bands, we refer the reader to \cite[Theorem 2.16]{Wang}.

\begin{theorem}\label{thm2} The following three conditions on a semigroup $S$ are equivalent.
\begin{itemize}

\item[(i)] $S$ is a retract ideal extension of a left zero semigroup by a zero semigroup.
\item[(ii)] $S$ is a subdirect product of a left zero semigroup and a zero semigroup.
\item[(iii)] $S$ satisfies the identity $ab=ac$ (and so $S$ is a left legal semigroup).
\end{itemize}
\end{theorem}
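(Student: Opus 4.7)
The plan is to mirror Theorem~\ref{thm1}, replacing left regular bands by left zero semigroups and the identity $ab=a^2b$ by the stronger identity $ab=ac$. For $(i)\Rightarrow(ii)$ I would repeat verbatim the argument from Theorem~\ref{thm1}: if $\varphi:S\to B=S^2$ is a retract homomorphism onto the left zero semigroup $B$, then $ker_{\varphi}\cap\varrho_{B}=\iota_S$, and \cite[I.3.6]{Petrich} presents $S$ as a subdirect product of $S/ker_{\varphi}\cong B$ and $S/B$. For $(ii)\Rightarrow(iii)$ a one-line check suffices: in the direct product $L\times Z$ of a left zero semigroup $L$ and a zero semigroup $Z$, every product $(\ell,z)(\ell',z')$ equals $(\ell,0_Z)$, which depends only on the first factor, so $ab=ac$ holds in $L\times Z$ and in every subsemigroup. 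The parenthetical remark (that such an $S$ is automatically left legal) is immediate, since setting $c=a$ in $ab=ac$ gives $ab=a^2$, whence $aba=a\cdot(ba)=a^2=ab$.

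The substantive direction is $(iii)\Rightarrow(i)$, and my plan rests on the observation that the hypothesis $ab=ac$ specializes (with $c=a$) to the much stronger single-variable identity $ab=a^2$ for all $a,b\in S$. From this I would deduce, in turn: (1) $S$ is left legal, so Lemmas~\ref{lem0} and~\ref{lem3} apply; (2) $S^2$ is a left zero semigroup, since for $a^2,b^2\in S^2$ the identity $ab=a^2$ yields $a^2\cdot b^2=a\cdot(ab^2)=a\cdot a=a^2$; and (3) the map $\varphi:a\mapsto a^2$ is a retract homomorphism of $S$ onto $S^2$, with the homomorphism property verified by
\[\varphi(a)\varphi(b)=a^2b^2=a^2=ab=(ab)^2=\varphi(ab),\]
and $\varphi|_{S^2}=\mathrm{id}$ because $S^2$ consists of idempotents (step (2) gives $u^2=u$ for $u\in S^2$).

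I do not anticipate any real obstacle. The one point that distinguishes this result from Theorem~\ref{thm1} and that requires careful verification is step (2): one must confirm that the strengthened identity $ab=ac$ collapses the band $S^2$ from Lemma~\ref{lem0} all the way down to a left zero semigroup and not just to a left regular band. Everything else is a direct specialization of the pattern established in Theorem~\ref{thm1}.
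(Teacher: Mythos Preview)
Your proposal is correct and follows essentially the same route as the paper: the $(i)\Rightarrow(ii)$ and $(ii)\Rightarrow(iii)$ steps are identical, and for $(iii)\Rightarrow(i)$ both you and the paper use the identity $ab=ac$ to show that the band $S^2$ from Lemma~\ref{lem0} is in fact left zero, then verify that $a\mapsto a^2$ is a retract homomorphism onto $S^2$. The only cosmetic difference is in the verification of the homomorphism property---the paper computes $a^2b^2=a(abb)=a(bab)=(ab)^2$ directly from $ax=ay$, while you pass through the derived identity $ab=a^2$---but the arguments are equivalent.
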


\begin{proof}
$(i)\Rightarrow (ii)$:  Assume that $S$ is a retract ideal extension of a left zero semigroup $L$ by a zero semigroup. Then $L$ is an ideal of $S$, and the Rees factor semigroup $S/L$ is a zero semigroup.
Let $\varphi$ be a retract homomorphism of $S$ onto $L$. Then \[ker_{\varphi}\cap \varrho _{L}=\iota _S,\] and hence \cite[I.3.6]{Petrich} implies that $S$ is a subdirect product of the left zero semigroup $L\cong S/ker_{\varphi}$ and the zero semigroup $S/L$.

\medskip

\noindent
$(ii)\Rightarrow (iii)$: It is obvious, because every subsemigroup of the direct product of a left zero semigroup and a zero semigroup satisfies the identity $ab=ac$.

\medskip

\noindent
$(iii)\Rightarrow (i)$: Let $S$ be a semigroup satisfying the identity
$ab=ac$. Then $S$ is a left legal semigroup. By Lemma~\ref{lem0}, $S^2$ is a band.
Then, for arbitrary elements $e, f\in S^2$, we have \[ef=e^2=e,\] using the assumption that $S$ satisfies the identity $ab=ac$. Hence $S^2$ is a left zero semigroup.
Let $\varphi$ be the mapping of $S$ into $S^2$ defined by the following way: for an arbitrary $a\in S$, $\varphi (a)=a^2$. As $S^2$ is a band, $\varphi$ maps $S$ onto $S^2$ which leaves the elements of $S^2$ fixed. For arbitrary  $a, b\in S$,
\[\varphi(a)\varphi(b)=a^2b^2=a(abb)=a(bab)=(ab)^2=\varphi(ab),\] using again the assumption that $S$ satisfies the identity $ax=ay$.
Thus $\varphi$ is a homomorphism. Consequently $S$ is a retract ideal extension of the left zero semigroup $S^2$ by the zero semigroup $S/S^2$.
\end{proof}

\begin{example}\rm\label{ex3}
On the set $S=\{x, y, b, c\}$, consider the operation defined by Table~\ref{Table 4}.

\begin{table}[htbp]
\begin{center}
\begin{tabular}{l|l l l l}
 &$x$&$y$&$b$&$c$\\
\hline
$x$&$b$&$b$&$b$&$b$\\
$y$&$c$&$c$&$c$&$c$\\
$b$&$b$&$b$&$b$&$b$\\
$c$&$c$&$c$&$c$&$c$
\end{tabular}
\caption[]{}\label{Table 4}
\end{center}
\end{table}

\medskip

\noindent
It is easy to check that $S$ is a semigroup with respect to this operation, and $S$ is an ideal extension of the left zero semigroup $S^2=\{b, c\}$ by the zero semigroup $S/S^2=\{x, y, 0\}$. Moreover, the mapping $\varphi \colon s\mapsto s^2$ is a retract homomorphism of $S$ onto $S^2$. Thus the semigroup $S$ satisfies the conditions of Theorem~\ref{thm2}.
\end{example}

\medskip

\noindent
The next example shows that if a semigroup is an ideal extension of a left zero semigroup by a zero semigroup, then it does not necessarily satisfy the conditions of Theorem~\ref{thm2}.

\begin{example}\label{ex3a}\rm
On the set $S=\{a, b, c, d, e, f\}$, consider the operation defined by Table~\ref{Table 5}.

\begin{table}[ht]
\begin{center}
\begin{tabular}{l|l l l l l l}
 &$a$&$b$&$c$&$d$&$e$&$f$\\
\hline
$a$&$e$&$c$&$e$&$c$&$e$&$c$\\
$b$&$d$&$f$&$d$&$f$&$d$&$f$\\
$c$&$c$&$c$&$c$&$c$&$c$&$c$\\
$d$&$d$&$d$&$d$&$d$&$d$&$d$\\
$e$&$e$&$e$&$e$&$e$&$e$&$e$\\
$f$&$f$&$f$&$f$&$f$&$f$&$f$
 \end{tabular}
\caption[]{}\label{Table 5}
\end{center}
\end{table}

\medskip

\noindent
It is easy to see that $S$ is a semigroup which is an ideal extension of the left zero semigroup $S^2=\{c, d, e, f\}$ by the zero semigroup $S/S^2=\{a, b, 0\}$. Since $ab\neq ac$, the semigroup $S$ does not satisfy the conditions of Theorem~\ref{thm2}.
\end{example}

\begin{theorem} A semigroup containing at least two elements is a subdirectly irreducible
semigroup satisfying the identity $ab=ac$ if and only if it is either a two-element left zero semigroup or a two-element zero semigroup.
\end{theorem}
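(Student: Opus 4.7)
My plan is to mirror the structure of the preceding theorem and reduce everything to Theorem~\ref{thm2}. First, for the ``if'' direction, I would note by inspection that any two-element semigroup has only the identity and the universal relation as congruences, so it is automatically subdirectly irreducible; a two-element left zero semigroup satisfies $ab=a=ac$, and a two-element zero semigroup satisfies $ab=0=ac$.

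For the converse, suppose $S$ has at least two elements, is subdirectly irreducible, and satisfies $ab=ac$. Applying the implication (iii)$\Rightarrow$(ii) of Theorem~\ref{thm2}, $S$ is a subdirect product of a left zero semigroup $L$ and a zero semigroup $Z$. The definition of subdirect irreducibility then forces one of the two projection homomorphisms to be an isomorphism, so $S$ is itself (isomorphic to) either a left zero semigroup or a zero semigroup.

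To finish, I need the fact that a subdirectly irreducible left zero semigroup, or a subdirectly irreducible zero semigroup, of cardinality at least two must have exactly two elements. In both cases, every equivalence relation on the underlying set is automatically a congruence (for a left zero semigroup because the product depends only on the first factor; for a zero semigroup because every product is the zero element). Hence the congruence lattice of $S$ is the full partition lattice of its underlying set, and for $|S|\ge 3$ that lattice has distinct atoms (each identifying a single pair) whose meet is the identity relation, so $S$ would fail to be subdirectly irreducible. Thus $|S|=2$. The only step that requires any thought is this last partition-lattice observation, which is analogous to the fact already invoked in the proof of the previous theorem about zero semigroups; the rest is a direct reduction via Theorem~\ref{thm2}.
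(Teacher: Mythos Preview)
Your proof is correct and follows essentially the same route as the paper: both directions match, and the converse is reduced via Theorem~\ref{thm2}(ii) to the case of a subdirectly irreducible left zero or zero semigroup, which is then forced to have two elements. The only difference is that you spell out the partition-lattice justification for the two-element claim, whereas the paper simply asserts it.
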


\begin{proof} It is obvious that every two-element semigroup is subdirectly irreducible. Moreover, every left zero semigroup and every zero semigroup satisfies the identity $ab=ac$.
To prove the converse assertion, let $S$ be a subdirectly irreducible
semigroup which has at least two elements and satisfies the identity $ab=ac$. Then, by $(ii)$ of Theorem~\ref{thm2}, $S$ is either a left zero semigroup or a zero semigroup. Since a left zero semigroup and, similarly, a zero semigroup is subdirectly irreducible if and only if it contains at most two elements, $S$ is either a two-element left zero semigroup or a two-element zero semigroup.
\end{proof}

\section{The least semilattice congruence on left legal semigroups}\label{sec5}
In the study of the structure of semigroups, the semilattice decomposition of semigroups plays a basic role.
It is proved in \cite{Tamura} that every semigroup $S$ has a least semilattice congruence $\eta_S$ whose classes are semilattice indecomposable semigroups; the induced partition of $S$ is called a semilattice decomposition of $S$, and the congruence classes of $\eta_S$ are said to be the semilattice components of $S$.

\noindent
A semigroup $S$ is said to be an \emph{archimedean semigroup} if, for arbitrary $a, b\in S$, there is a positive integer $n$ such that $a^n\in SbS$ and $b^n\in SaS$. Every archimedean semigroup is semilattice indecomposable. It is proved in \cite{Putcha} that a semigroup is a semilattice of archimedean semigroups if and only if it is a Putcha semigroup. Recall that a semigroup $S$ is called a \emph{Putcha semigroup} if, for every $a, b\in S$, the assumption $a\in S^1bS^1$ implies $a^m\in S^1b^2S^1$ for some positive integer $m$.

\begin{theorem}\label{thm3} Every left legal semigroup is a Putcha semigroup.
\end{theorem}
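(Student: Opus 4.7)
The plan is to establish the Putcha property with the uniform choice $m=2$, using Lemma~\ref{lem3} as the only real ingredient. Fix $a,b\in S$ and assume $a\in S^1bS^1$, so that $a=ubv$ for some $u,v\in S^1$; the goal is to rewrite $a^2$ so that a $b^2$ appears in the middle.

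To do this, I would start from
\[
a^2 \;=\; a\cdot(ubv) \;=\; (au)\cdot b\cdot v,
\]
using associativity. Note that $au$ is always an honest element of $S$: if $u=1$ then $au=a\in S$, while if $u\in S$ then $au\in S$ by closure. Lemma~\ref{lem3}, applied with $au$ in place of ``$a$'' and $b$ in place of ``$b$'', gives $(au)\,b=(au)\,b^2$, so
\[
a^2 \;=\; (au)\cdot b^2\cdot v \;\in\; S^1 b^2 S^1,
\]
and the Putcha condition is verified with $m=2$.

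The only point that needs a moment's care is the role of the adjoined identity $1$: one must make sure that Lemma~\ref{lem3} is invoked on an actual element of $S$, which is why I collect $au\in S$ before quoting the lemma. Beyond this small bookkeeping, no further use of the identity $aba=ab$ (nor of Lemma~\ref{lem0}) is needed, since Lemma~\ref{lem3} already packages the substitution we require. I do not foresee any genuine obstacle; the proof is essentially a one-line application of $xb=xb^2$ after a single associativity rearrangement.
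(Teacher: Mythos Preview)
Your argument is correct. The only subtlety you already addressed: by writing $a^2=(au)bv$ with $au\in S$ you may legitimately apply Lemma~\ref{lem3} inside $S$ (not just in $S^1$), and then $(au)b=(au)b^2$ gives $a^2=(au)b^2v\in S^1b^2S^1$ with $m=2$. Nothing is missing.

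Your route, however, is genuinely different from the paper's. The paper does not verify the Putcha condition directly; instead it observes that a left legal semigroup is $\mathcal{L}$-commutative (indeed $ab=aba=a\cdot ba$, so one may take $x=a$), hence right weakly commutative, and then invokes two results from \cite{Nagybook} (Theorem~4.1 and Lemma~2.2) stating that every right weakly commutative semigroup is a Putcha semigroup. Your proof is more elementary and fully self-contained, needing only Lemma~\ref{lem3} and a single associativity step, and it even gives the uniform exponent $m=2$. The paper's approach, on the other hand, places the result in a known hierarchy ($\mathcal{L}$-commutative $\Rightarrow$ right weakly commutative $\Rightarrow$ Putcha), which is what later allows the authors to quote \cite[Theorem~2.1]{Nagybook} for the explicit description of $\eta_S$ in Remark~\ref{rm1}.
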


\begin{proof} Let $S$ be a left legal semigroup. It is clear that $S$ is $\cal L$-commutative, and hence right weakly commutative. By \cite[Theorem 4.1]{Nagybook} and \cite[Lemma 2.2]{Nagybook}, every right weakly commutative semigroup is a Putcha semigroup, which proves our assertion.
\end{proof}

\begin{remark}\label{rm1}
Theorem~\ref{thm3} and \cite[Theorem 2.1]{Nagybook} together imply that if $S$ is a left legal semigroup, then,
\[\eta _S=\{(a,b)\in S\times S:\ a^n\in SbS, \ b^n\in SaS \ \hbox{for a positive integer}\ n\},\]
where $\eta _S$ denotes the least semilattice congruence on $S$. This implies that a left legal semigroup is semilattice indecomposable if and only if it is archimedean.
\end{remark}

For an arbitrary semigroup $S$, let $\tau _S$
denote the binary relations on $S$ defined by the following way: $(a, b)\in \tau _S$
for elements $a, b\in S$, if and only if there is a positive integer $n$ such that $a^nb=a^{n+1}$ and $b^na=b^{n+1}$.
By \cite[Lemma 4.1]{Nagybook}, $\tau _S$ and its dual $\sigma _S$ are equivalence relations on an arbitrary semigroup $S$. Equivalence relations
$\tau _S$ and $\sigma _S$ are studied by many authors in special classes of semigroups. In the book \cite{Clifford1}, Theorem 4.14 asserts that if $S$ is a commutative semigroup, then $\tau _S=\sigma _S$ is a least weakly separative congruence on $S$. In \cite[Theorem 5.4]{Chrislock}, Chrislock shown that if $S$ is a medial semigroup, then  $\tau _S$ is the least left separative congruence on $S$, and $\sigma _S$ is the least right separative congruence on $S$. In \cite[Theorem 6]{Mukherjee}, Mukherjee proved that $\sigma _S$ is a weakly separative congruence on a quasi-commutative semigroup $S$. It is also conjectured that $\sigma _S$ is the least weakly separative congruence. Pondelicek proved \cite[Theorem 5]{Pondelicek} that if $S$ is a duo semigroup, then $\tau _S\cap \sigma _S$ is the least weakly separative congruence on $S$. Since every quasicommutative semigroup is a duo semigroup, this result implies that $\tau _S\cap \sigma _S$ is the least weakly separative congruence on a quasi-commutative semigroup, giving a negative answer for Mukherjee's conjecture. Generalizing and completing the above results, in \cite[Theorem 1]{Nagy} it is proved that $\tau _S\cap \sigma _S$ is the least weakly separative congruence on a weakly commutative semigroup $S$. Using this result, in \cite{Nagy2} the weakly separative weakly commutative semigroups are characterized.

\medskip

\noindent
Since every left legal semigroup is right weakly commutative, \cite[Theorem 4.7]{Nagybook} implies that $\tau _S$ is a weakly separative congruence on a left legal semigroup. The next theorem gives an additional result about $\tau _S$ if $S$ is a left legal semigroup.

\begin{theorem}\label{thm4} If $S$ is a left legal semigroup, then $\tau _S=\eta_S$, where $\eta_S$ is the least semilattice congruence on $S$.
\end{theorem}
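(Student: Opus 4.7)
The plan is to reduce the defining condition of $\tau_S$, via Lemma~\ref{lem0}, to the two equations $a^2b=a^2$ and $b^2a=b^2$, and then to show that these same two equations characterise $\eta_S$ on a left legal semigroup (using Remark~\ref{rm1}). Indeed, since $a^k=a^2$ for every $k\ge 2$ in a left legal semigroup, the conditions $a^nb=a^{n+1}$, $b^na=b^{n+1}$ with $n=2$ are exactly $a^2b=a^2$, $b^2a=b^2$; and the $n=1$ case $ab=a^2$, $ba=b^2$ implies the $n=2$ case after left-multiplying by $a$ and by $b$ respectively. So $(a,b)\in\tau_S$ is equivalent to the pair of identities above.

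For the direction $\tau_S\subseteq\eta_S$, I would start from $a^nb=a^{n+1}$ with $n\ge 1$ and right-multiply by $a$ to obtain
\[
a^nba=a^{n+2}=a^2,
\]
where the last equality uses Lemma~\ref{lem0}. The left-hand side lies in $SbS$ because $a^n,a\in S$, so $a^2\in SbS$; dually $b^2\in SaS$. By Remark~\ref{rm1} this yields $(a,b)\in\eta_S$.

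For the deeper direction $\eta_S\subseteq\tau_S$, I would invoke Remark~\ref{rm1} to write $a^n\in SbS$ for some $n\ge 1$; since $SbS$ is closed under left multiplication by elements of $S$, the exponent may be raised freely, so we may assume $n\ge 2$ and hence $a^2=a^n=s_1bs_2$ for some $s_1,s_2\in S$. The crux of the argument is then the calculation
\[
a^2b=s_1bs_2b=s_1(bs_2b)=s_1bs_2=a^2,
\]
whose one nontrivial step is $bs_2b=bs_2$, a direct application of the left legal identity $uvu=uv$ with $u=b$ and $v=s_2$. Dually $b^2a=b^2$, and by the opening paragraph this says $(a,b)\in\tau_S$. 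The only step requiring a moment of care is the passage from a generic witness $n$ to the exponent $2$, but this is immediate from $a\cdot SbS\subseteq SbS$; beyond that the proof is a single application of the defining identity, so I do not anticipate any serious obstacle.
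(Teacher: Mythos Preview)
Your proposal is correct and follows essentially the same approach as the paper: both directions rest on Remark~\ref{rm1}, the key step in $\eta_S\subseteq\tau_S$ is the left legal identity applied as $s_1bs_2b=s_1(bs_2b)=s_1bs_2$, and the step in $\tau_S\subseteq\eta_S$ is right-multiplying $a^nb=a^{n+1}$ by $a$ to land in $SbS$. The only cosmetic difference is that you first normalise $\tau_S$ to the exponent-$2$ conditions $a^2b=a^2$, $b^2a=b^2$ (which the paper records separately as Remark~\ref{remyy}), whereas the paper works directly with a generic $n\ge 2$; the computations are otherwise identical.
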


\begin{proof} We use Remark~\ref{rm1}.
First we show that $\eta _S\subseteq \tau _S$. Assume $(a, b)\in \eta _S$ for elements $a, b\in S$. Then, by Remark~\ref{rm1}, $a^n=xby$ and $b^n=uav$ for a positive integer $n$ and for some elements $x, y, u, v\in S$. We can suppose $n\geq 2$. Then
\[a^nb=(xby)b=x(byb)=xby=a^n=a^{n+1},\] because $a^k=a^2$ for every integer $k\geq 2$ by Lemma~\ref{lem0}. Similarly, $b^na=b^{n+1}$.
Thus $(a, b)\in  \tau _S$. Hence $\eta _S \subseteq \tau _S$. To show $\tau _S\subseteq \eta _S$, assume $(a, b)\in \tau _S$ for elements $a, b\in S$. Then
$a^nb=a^{n+1}$ and $b^na=b^{n+1}$ for some positive integer $n\geq 2$. Thus \[a^{n+2}=a^nba\in SbS\quad \hbox{and}\quad b^{n+2}=b^nab\in SaS.\] Hence $(a, b)\in \eta _S$. Thus $\tau _S\subseteq \eta _S$. Consequently $\eta _S =\tau _S$.
\end{proof}

\begin{remark}\label{remyy} By Lemma~\ref{lem0}, if $a$ is an arbitrary element of a left legal semigroup, then $a^k=a^2$ is satisfied
for every integer $k\geq 2$. Thus $(a, b)\in \tau _S$ is satisfied for elements $a$ and $b$ of a left legal semigroup if and only if $a^2b=a^2$ and $b^2a=b^2$.
\end{remark}

\begin{theorem}\label{thm5} The following three conditions on a semigroup $S$ are equivalent.
\begin{itemize}
\item[(i)] $S$ is a semilattice indecomposable left legal semigroup.
\item[(ii)] $S$ is a left legal semigroup satisfying the identity $a^2b=a^2$.
\item[(iii)] $S^2$ is a left zero semigroup.
\end{itemize}
\end{theorem}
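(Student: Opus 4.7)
The plan is to establish the cyclic chain $(i) \Rightarrow (ii) \Rightarrow (iii) \Rightarrow (i)$, each step relying on the machinery already assembled. For the first implication, semilattice indecomposability of $S$ means $\eta_S = S \times S$. Since $S$ is left legal, Theorem~\ref{thm4} identifies $\eta_S$ with $\tau_S$, and the description of $\tau_S$ given in Remark~\ref{remyy} immediately yields $a^2 b = a^2$ for all $a, b \in S$.

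For $(ii) \Rightarrow (iii)$ I expect a short calculation. Given any $e, f \in S^2$, Lemma~\ref{lem0} provides $e^2 = e$, and then $ef = e^2 f = e^2 = e$ using the identity $a^2 b = a^2$; thus $S^2$ is a left zero semigroup.

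The implication $(iii) \Rightarrow (i)$ is the interesting direction, and the main obstacle is to verify that $S$ itself is actually left legal, since only a condition on $S^2$ is assumed and no identity on $S$ is given at the outset. My plan is to first establish the following absorption property: for every $e \in S^2$ and every $s \in S$ one has $es = e$. Indeed, $es$ again lies in $S^2$, so the left zero property of $S^2$ gives $e(es) = e$; combined with $e^2 = e$ (Lemma~\ref{lem0}) this collapses to $es = e$. Specializing $e = ab \in S^2$ and $s = a$ then forces $aba = ab$, so $S$ is left legal. With left legality in hand, Theorem~\ref{thm4} and Remark~\ref{remyy} reduce semilattice indecomposability to verifying $\tau_S = S \times S$; but for any $a, b \in S$ the elements $a^2$ and $b^2$ lie in $S^2$, so the absorption property just proved gives $a^2 b = a^2$ and $b^2 a = b^2$, completing the argument.
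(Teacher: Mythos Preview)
Your argument is correct and follows essentially the same route as the paper: the equivalence of (i) and (ii) via Theorem~\ref{thm4} and Remark~\ref{remyy}, and the passage between (ii) and (iii) via the absorption property of $S^2$. The paper organizes it as $(i)\Leftrightarrow(ii)$ and then $(ii)\Leftrightarrow(iii)$ rather than a cycle, but the computations are the same; your explicit formulation of the absorption law $es=e$ for $e\in S^2$, $s\in S$ is in fact a cleaner way of packaging what the paper does inline.

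One small slip to fix: in the step $(iii)\Rightarrow(i)$ you invoke Lemma~\ref{lem0} to get $e^2=e$ for $e\in S^2$, but Lemma~\ref{lem0} assumes $S$ is left legal, which is precisely what you are in the middle of proving. The fact you need is immediate from the hypothesis anyway --- $S^2$ is a left zero semigroup, hence a band, so $e^2=e$ --- so just replace the citation of Lemma~\ref{lem0} with this observation and the argument stands.
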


\begin{proof} By Theorem~\ref{thm4} and Remark~\ref{remyy}, a left legal semigroup $S$ is semilattice indecomposable if and only if $\tau _S$ is the identity relation on $S$, i.e. $a^2b=a^2$ is satisfied for every elements $a$ and $b$ of $S$. Thus it is sufficient to show that $(ii)$ and $(iii)$ are equivalent.

\medskip

\noindent
$(ii)\Rightarrow (iii)$: Assume that $S$ is a left legal semigroup satisfying the identity $a^2b=a^2$. Since $S$ is left legal, $S^2$ is a band by Lemma~\ref{lem0}. Let $e, f\in S^2$ be arbitrary elements. Then \[ef=e^2f=e^2=e,\] i.e. $S^2$ is a left zero semigroup.

\medskip

\noindent
$(iii)\Rightarrow (ii)$: Assume that $S^2$ is a left zero semigroup. Let $a, b\in S$ be arbitrary elements. Then \[aba=(ab)(ab)a=(ab)(aba)=ab.\] Thus $S$ is a left legal semigroup. Moreover, \[a^2b=(a^2a^2)b=a^2(a^2b)=a^2.\]
\end{proof}

\medskip

\noindent
The semigroup $S$ defined in Example~\ref{ex3} is a semilattice
indecomposable left legal semigroup in which $S^2$ is a left zero semigroup such that it is a retract ideal of $S$.
The semigroup $S$ defined in Example~\ref{ex3a} is also a semilattice indecomposable left legal semigroup in which $S^2$ is a left zero semigroup, but $S^2$ is not a retract ideal of $S$.

\medskip

\noindent
By \cite[II. 1.4. Lemma]{Petrich} and the dual of \cite[II. 3. 12. Proposition]{Petrich}, a band $B$ is a left regular band if and only if the classes of its least semilattice congruence $\eta_B$ are left zero semigroups. Moreover, $(e, f)\in \eta_B$ is satisfied for elements $e, f\in B$ if and only if $ef=e$ and $fe=f$. The next theorem shows how to get the classes of the least semilattice congruence on a left legal semigroup.

\begin{theorem}\label{thm6a}
Let $S$ be a left legal semigroup and $L_i$ ($i\in I$) be left zero subsemigroups of $S$ such that the left regular band $S^2$ is a semilattice of $L_i$ ($i\in I$). Then
the classes of the least semilattice congruence $\eta_S$ on $S$ are $S_i=\{a\in S:\ a^2\in L_i\}$ ($i\in I$).
\end{theorem}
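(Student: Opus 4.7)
The plan is to translate the description of $\eta_S$ obtained in the previous section into a statement about the semilattice decomposition of the band $S^2$. By Theorem~\ref{thm4} we have $\eta_S=\tau_S$, and by Remark~\ref{remyy} the relation $\tau_S$ is explicitly given by $(a,b)\in\tau_S$ if and only if $a^2b=a^2$ and $b^2a=b^2$. On the other hand, by the paragraph preceding the theorem, the least semilattice congruence $\eta_{S^2}$ on the left regular band $S^2$ has the $L_i$ as its classes, and for $e,f\in S^2$ one has $(e,f)\in\eta_{S^2}$ if and only if $ef=e$ and $fe=f$.

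The key reduction is the claim that, for $a,b\in S$,
\[(a,b)\in\tau_S\quad\Longleftrightarrow\quad (a^2,b^2)\in\eta_{S^2}.\]
For the forward implication I would start from $a^2b=a^2$ and $b^2a=b^2$ and multiply on the right by $b$ and $a$ respectively, obtaining $a^2b^2=(a^2b)b=a^2b=a^2$ and $b^2a^2=(b^2a)a=b^2a=b^2$, so $(a^2,b^2)\in\eta_{S^2}$. For the reverse implication I would use Lemma~\ref{lem3} in the form $xy=xy^2$: from $a^2b^2=a^2$ and $b^2a^2=b^2$ I get $a^2b=a^2b^2=a^2$ and $b^2a=b^2a^2=b^2$, hence $(a,b)\in\tau_S$.

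Once this equivalence is established, the theorem follows immediately: two elements $a,b\in S$ are $\eta_S$-related if and only if $a^2$ and $b^2$ belong to the same semilattice component $L_i$ of $S^2$, which means precisely that $a,b\in S_i=\{c\in S:\ c^2\in L_i\}$. Conversely, the sets $S_i$ partition $S$ because every $a\in S$ satisfies $a^2\in S^2$ and $S^2=\bigcup_{i\in I}L_i$ is a disjoint union, so the classes of $\eta_S$ are exactly the $S_i$.

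I expect no serious obstacle; the only care needed is in the manipulation of exponents, where Lemma~\ref{lem0} (giving $a^k=a^2$ for $k\geq 2$) and Lemma~\ref{lem3} (giving $ab=ab^2$) are applied repeatedly. The slightly subtle step is noticing that $\tau_S$ restricted to second powers coincides with $\eta_{S^2}$, which is what allows one to import the known structure of semilattice decompositions of left regular bands into $S$.
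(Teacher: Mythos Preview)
Your argument is correct and follows essentially the same route as the paper: both reduce $\eta_S=\tau_S$ via Theorem~\ref{thm4} and Remark~\ref{remyy} to the conditions $a^2b=a^2$, $b^2a=b^2$, then use Lemma~\ref{lem3} to pass to $a^2b^2=a^2$, $b^2a^2=b^2$, which is exactly the condition that $a^2,b^2$ lie in the same $L_i$. The only cosmetic difference is that you name the intermediate relation $\eta_{S^2}$ explicitly, whereas the paper proceeds directly to the conclusion.
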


\begin{proof} Let $a, b\in S$ be arbitrary elements. By Theorem~\ref{thm4} and Remark~\ref{remyy}, $(a, b)\in \eta_S$ if and only if $(a, b)\in \tau_S$, i.e.
\[a^2b=a^2\quad \hbox{and}\quad b^2a=b^2.\] By Lemma~\ref{lem3}, it is equivalent to the condition
\[a^2b^2=a^2\quad \hbox{and}\quad b^2a^2=b^2,\] i.e. $a^2, b^2\in L_i$ for some $i\in I$, which is equivalent to the condition that $a, b\in S_i$. Thus the classes of the least semilattice congruence $\eta_S$ on $S$ are $S_i$ ($i\in I$).
\end{proof}

\medskip
\noindent

From Theorem~\ref{thm6a} it follows that the maximal semilattice image of a left legal semigroup $S$ is equal to that of its maximal sub-left regular band $S^2$, which in turn is isomorphic to the semilattice of its $\mathcal L$-classes = $\mathcal J$-classes.

\medskip
\noindent
It is easy to see that $\{xx\}$, $\{ yy\}$, and $\{xy, xxy, yx, yyx\}$ are left zero subsemigroups of the free left legal semigroup $F_{\mathcal{LLS}}(X)$ considered in Example~\ref{ex1} such that $F^2_X=\{xx, yy, xy, xxy, yx, yyx\}$ is a semilattice of them.  By Theorem~\ref{thm6a}, the classes of the least semilattice congruence on $F_{\mathcal{LLS}}(X)$ are $A=\{x, xx\}$, $B=\{y, yy\}$, and  $C=\{xy, xxy, yx, yyx\}$.
It can also be directly proved that $A$, $B$, and $C$ are semilattice indecomposable subsemigroups of $F_{\mathcal{LLS}}(X)$, and the equivalence relation on $F_{\mathcal{LLS}}(X)$ whose classes are $A$, $B$, and $C$ is a semilattice congruence on $F_{\mathcal{LLS}}(X)$.

\section{Right and left separativity of left legal semigroups}\label{sec6}

\begin{theorem}\label{thm7} The following two conditions on a left legal semigroup $S$ are equivalent.
\begin{itemize}
\item[(i)] $S$ is right separative.
\item[(ii)] $S$ is weakly separative.
\end{itemize}
\end{theorem}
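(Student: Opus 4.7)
The plan is to prove each implication by supplementing the given data with one extra equality, which will allow the other separativity condition to fire. In both directions the crucial tools are the defining identity $aba=ab$ and Lemma~\ref{lem0} (which yields $a^k=a^2$ for every $k\ge 2$); these let us interchange $ab$, $ba$, $a^2$, $b^2$ once any two of them are already known to coincide.

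For (ii)$\Rightarrow$(i), I would assume $S$ is weakly separative and take $a,b\in S$ with $ab=b^2$ and $ba=a^2$. To invoke weak separativity I need $a^2=ab=b^2$, so the only missing equality is $a^2=ab$. I would obtain it by the short chain $ab=aba=a(ba)=a\cdot a^2=a^3=a^2$, where the first step uses the left legal identity and the last uses Lemma~\ref{lem0}. Weak separativity then forces $a=b$.

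For (i)$\Rightarrow$(ii), I would assume $S$ is right separative and take $a,b\in S$ with $a^2=ab=b^2$. Here $ab=b^2$ is already in hand, so I only need $ba=a^2$ to apply right separativity. The symmetric chain $ba=bab=b(ab)=b\cdot b^2=b^3=b^2=a^2$ supplies it, and right separativity yields $a=b$.

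I do not expect any real obstacle: each direction is a three-line computation, and the only point worth highlighting is that although the left legal identity $aba=ab$ is asymmetric in $a$ and $b$, in both directions the substitution pattern is the same, namely using $xyx=xy$ together with $x^3=x^2$ to collapse a mixed product into a power of one variable once a linking equality between $ab$ (or $ba$) and $a^2$ (or $b^2$) is available.
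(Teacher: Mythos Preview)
Your proof is correct and follows essentially the same approach as the paper: in each direction you use the left legal identity together with $x^3=x^2$ to collapse a mixed product to a square, thereby supplying the missing equality needed to invoke the other separativity hypothesis. The only cosmetic difference is that in (ii)$\Rightarrow$(i) you derive $ab=a^2$ via $ab=aba=a(ba)=a^3=a^2$, whereas the paper derives the symmetric $ba=b^2$ via $ba=bab=b(ab)=b^3=b^2$; both routes reach $a^2=ab=b^2$ immediately.
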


\begin{proof} $(i)\Rightarrow (ii)$: Let $S$ be a right separative left legal semigroup. To show that $S$ is weakly separative, assume $a^2=ab=b^2$ for elements $a, b\in S$. Using Lemma~\ref{lem0}, $ab=b^2$ implies \[bab=b^3=b^2,\] and hence \[ba=bab=b^2=a^2.\] Then
\[ab=b^2\quad \hbox{and}\quad ba=a^2.\] Since $S$ is right separative, we get $a=b$. Consequently $S$ is weakly separative.

\medskip

\noindent
$(ii)\Rightarrow (i)$: Let $S$ be a weakly separative left legal semigroup. To show that $S$ is right separative, assume
$ab=b^2$ and $ba=a^2$ for elements $a, b\in S$. Using the first equation and Lemma~\ref{lem0}, we get \[ba=bab=b^3=b^2,\] and hence \[b^2=ba=a^2.\] Since $S$ is weakly separative, we get $a=b$. Consequently $S$ is right separative.
\end{proof}

\begin{remark}\label{rm2} It is clear that every left zero semigroup containing at least two elements is left legal and weakly separative, but not left separative. Thus a weakly separative left legal semigroup is not left separative, in general.
\end{remark}

\begin{theorem}\label{thm8} Every left separative left legal semigroup is commutative.
\end{theorem}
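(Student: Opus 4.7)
The plan is to deduce commutativity by applying the left separativity condition directly to the pair of elements $(ab,\, ba)$ for arbitrary $a,b\in S$. Recall that left separativity, as the analogue of the right version defined earlier, asserts that $x^2=xy$ and $y^2=yx$ force $x=y$; so if I can verify both equalities with $x=ab$ and $y=ba$, I am done.

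First I would use Lemma~\ref{lem0} to note that every element of $S^2$ is idempotent, in particular $(ab)^2=ab$ and $(ba)^2=ba$. Then I would compute
\[
(ab)(ba)=a(b^2)a = (ab^2)a = (ab)a = aba = ab,
\]
invoking Lemma~\ref{lem3} (in the form $ab^2=ab$) at the second step and the left legal identity $aba=ab$ at the last. Symmetrically,
\[
(ba)(ab)=b(a^2)b = (ba^2)b = (ba)b = bab = ba.
\]
Setting $x=ab$ and $y=ba$, these two calculations give $x^2=ab=xy$ and $y^2=ba=yx$.

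Left separativity of $S$ then yields $x=y$, i.e.\ $ab=ba$. Since $a,b$ were arbitrary, $S$ is commutative. The whole argument is essentially a two-line manipulation after the right pair of elements is identified; the only place where one has to be a bit careful is in choosing to test separativity on $(ab,ba)$ rather than on $(a,b)$ directly, and in recognising that the reduction $(ab)(ba)=ab$ needs both Lemma~\ref{lem3} and the defining identity $aba=ab$, not merely idempotence of $S^2$.
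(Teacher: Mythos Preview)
Your proof is correct and follows essentially the same approach as the paper: both apply left separativity to the pair $(ab,ba)$ and verify $(ab)(ba)=(ab)^2$ and $(ba)(ab)=(ba)^2$. The only cosmetic difference is in the computation: the paper rewrites $(ab)(ba)=(aba)(bab)=(ab)^3=(ab)^2$ in one line, while you go through $(ab)(ba)=ab^2a=(ab)a=ab$ using Lemma~\ref{lem3} explicitly; the substance is identical.
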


\begin{proof} Let $S$ be a left separative left legal semigroup. Then, for every $a, b\in S$,
\[(ab)(ba)=(aba)(bab)=(ab)^3=(ab)^2,\] using Lemma~\ref{lem0}. Similarly, \[(ba)(ab)=(ba)^2.\] Since $S$ is left separative, we get $ab=ba$, and hence $S$ is commutative.
\end{proof}

\section{On the lattice of all left legal semigroup varieties}\label{sec7}

The class of all varieties of semigroups forms a lattice under the following operations: for varieties $\mathcal X$ and $\mathcal Y$, their join ${\mathcal X}\vee {\mathcal Y}$ is the variety
generated by the set-theoretical union of ${\mathcal X}$ and $\mathcal Y$ (as classes of semigroups),
and their meet $\mathcal{X} \wedge \mathcal{Y}$ coincides with the set-theoretical intersection of $\mathcal X$ and $\mathcal Y$.
A variety $\mathcal V$ of semigroups is said to be a left legal semigroup variety, if every semigroup belonging to $\mathcal V$ is left legal. The set of all left legal semigroup varieties form a sublattice of the lattice of all semigroup varieties. This sublattice denoted by ${\mathcal L}({\bf LLS})$.
In this section we use the following notations:

\begin{itemize}
\item[] $\mathcal {A}$: the variety of semigroups satisfying the identities $aba=ab$, $ab=a^2b$;
\item[] $\mathcal {B}$: the variety of semigroups satisfying the identity $ab=ac$;
\item[] $\mathcal{C}$: the variety of semigroups satisfying the identities $abc=acb$, $ab=a^2b$;
\item[] $\mathcal{D}$: the variety of semigroups satisfying the identities $ab=ba$, $ab=a^2b$;
\item[] $\mathcal {LRB}$: the variety of all left regular bans ($a^2=a$, $aba=ab$);
\item[] $\mathcal {LNB}$: the variety of all left normal bands ($a^2=a$ $abc=acb$);
\item[] $\mathcal {SL}$: the variety of all semilattices ($a^2=a$, $ab=ba$);
\item[] $\mathcal {LZ}$: the variety of all left zero semigroups ($ab=a$);
\item[] $\mathcal {ZM}$: the variety of all zero semigroups ($ab=0$);
\item[] $\mathcal {T}$: the trivial variety ($a=b$).
\end{itemize}

\medskip

\noindent
We note that if a semigroup $S$ belongs to the variety $\mathcal{C}$, then $aba=a^2b=ab$ for arbitrary $a, b\in S$, and hence $S$ is a left legal semigroup. Then, by Lemma~\ref{lem3}, $ab=ab^2$ is also satisfied for arbitrary $a, b\in S$. Thus a semigroup belongs to the variety $\mathcal{C}$ if and only if it satisfies the identities $abc=acb$, $ab=a^2b$, and  $ab=ab^2$.

\medskip
\noindent
It is easy to see that \[\mathcal{LZ}, \mathcal{ZM}\subseteq \mathcal{B}\subseteq \mathcal{A},\]
\[\mathcal{SL}\subseteq\mathcal{LNB}\subseteq{LRB}\subseteq\mathcal{A},\]
\[\mathcal{D}\subseteq \mathcal{C}\subseteq \mathcal{A}.\]
Since $\mathcal{A}\subseteq \mathcal{LLS}$, all of the above varieties are in the lattice of all left legal semigroup varieties.

\medskip
\noindent
It is known (see, for example, \cite{Evans} and \cite{Petrich}) that
\[\mathcal{B}=\mathcal {ZM}\vee \mathcal {LZ},\]
\[\mathcal{C}=\mathcal{B}\vee \mathcal{SL}=\mathcal {B}\vee \mathcal {LNB}=\mathcal{B}\vee \mathcal{D}=\mathcal{D}\vee \mathcal{LZ}=\mathcal{LNB}\vee\mathcal{ZM},\]
\[\mathcal{D}=\mathcal{ZM}\vee \mathcal{SL},\]
\[\mathcal{LNB}=\mathcal{LZ}\vee \mathcal{SL},\]
\[\mathcal {B}\wedge \mathcal {LNB}=\mathcal{LZ},\ \mathcal{C}\wedge \mathcal{LRB}=\mathcal{LNB}, \ \mathcal{D}\wedge \mathcal{LNB}=\mathcal{SL}.\]

\begin{theorem}\label{thmsup} For the varieties $\mathcal{A}$, $\mathcal{B}$, $\mathcal {C}$, $\mathcal{D}$, $\mathcal{LRB}$, and $\mathcal{ZM}$, the following equations hold:
$\mathcal {A}=\mathcal {ZM}\vee \mathcal {LRB}=\mathcal{D}\vee \mathcal{LRB}=\mathcal{C}\vee \mathcal{LRB}=\mathcal{B}\vee \mathcal{LRB}$.
\end{theorem}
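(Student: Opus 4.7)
The plan is to verify the ``easy'' side of each inclusion from the variety lattice and then close the chain by invoking Theorem~\ref{thm1}. First I would record the containments $\mathcal{ZM}\subseteq\mathcal{B}$, $\mathcal{ZM}\subseteq\mathcal{D}$, $\mathcal{B}\subseteq\mathcal{C}$, $\mathcal{D}\subseteq\mathcal{C}$, and $\mathcal{C},\mathcal{LRB}\subseteq\mathcal{A}$. All of these are immediate from the defining identities and are essentially listed in the excerpt: for instance $\mathcal{B}\subseteq\mathcal{C}$ follows from $ax=ay$ because $abc=a(bc)=a(cb)=acb$ and $a^2b=a(ab)=ab$; the containment $\mathcal{C}\subseteq\mathcal{A}$ is the opening remark of the section; and $\mathcal{ZM}\subseteq\mathcal{D}$ is contained in the identity $\mathcal{D}=\mathcal{ZM}\vee\mathcal{SL}$.

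Joining each variety with $\mathcal{LRB}$ yields, by monotonicity of $\vee$,
\[\mathcal{ZM}\vee\mathcal{LRB}\ \subseteq\ \mathcal{B}\vee\mathcal{LRB}\ \subseteq\ \mathcal{C}\vee\mathcal{LRB}\ \subseteq\ \mathcal{A},\]
\[\mathcal{ZM}\vee\mathcal{LRB}\ \subseteq\ \mathcal{D}\vee\mathcal{LRB}\ \subseteq\ \mathcal{C}\vee\mathcal{LRB},\]
where the final inclusion into $\mathcal{A}$ uses that $\mathcal{A}$ is a variety containing both $\mathcal{C}$ and $\mathcal{LRB}$.

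The crux of the proof is the reverse containment $\mathcal{A}\subseteq\mathcal{ZM}\vee\mathcal{LRB}$. For this I invoke the equivalence $(iii)\Leftrightarrow(ii)$ of Theorem~\ref{thm1}: any semigroup satisfying $aba=ab$ together with $ab=a^2b$---that is, any $S\in\mathcal{A}$---is a subdirect product of a left regular band and a zero semigroup. Since a subdirect product of semigroups drawn from $\mathcal{LRB}\cup\mathcal{ZM}$ lies in the variety $\mathcal{LRB}\vee\mathcal{ZM}$ generated by $\mathcal{LRB}\cup\mathcal{ZM}$ (it is a subalgebra of a product of such algebras, and varieties are closed under $S$ and $P$), this gives $\mathcal{A}\subseteq\mathcal{ZM}\vee\mathcal{LRB}$.

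Combining the two parts, the chain closes on itself, forcing each of $\mathcal{ZM}\vee\mathcal{LRB}$, $\mathcal{B}\vee\mathcal{LRB}$, $\mathcal{D}\vee\mathcal{LRB}$, and $\mathcal{C}\vee\mathcal{LRB}$ to equal $\mathcal{A}$. There is no real obstacle once Theorem~\ref{thm1} is at hand: all non-trivial content is the subdirect decomposition supplied by that theorem, and the rest is bookkeeping using the already-known inclusions among the named varieties.
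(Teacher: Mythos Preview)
Your proof is correct and follows essentially the same approach as the paper: both hinge on Theorem~\ref{thm1} to obtain $\mathcal{A}\subseteq\mathcal{ZM}\vee\mathcal{LRB}$, and the remaining equalities are lattice bookkeeping. The only cosmetic difference is that the paper verifies each of the four joins separately using the auxiliary decompositions $\mathcal{B}=\mathcal{ZM}\vee\mathcal{LZ}$, $\mathcal{D}=\mathcal{ZM}\vee\mathcal{SL}$, etc., whereas your sandwich argument $\mathcal{ZM}\vee\mathcal{LRB}\subseteq\mathcal{X}\vee\mathcal{LRB}\subseteq\mathcal{A}$ collapses all cases at once using only the direct inclusions $\mathcal{ZM}\subseteq\mathcal{B},\mathcal{D}\subseteq\mathcal{C}\subseteq\mathcal{A}$.
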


\begin{proof}

\noindent
$\mathcal {A}=\mathcal {ZM}\vee \mathcal {LRB}$: Since the varieties $\mathcal {ZM}$ and $\mathcal {LRB}$ are subvarieties of the variety $\mathcal A$, we have $\mathcal {ZM}\vee \mathcal {LRB}\subseteq \mathcal {A}$. By Theorem~\ref{thm1}, every semigroup belonging to $\mathcal A$ is a subdirect product of a left regular band and a zero semigroup, and hence it belongs to $\mathcal {ZM}\vee \mathcal {LRB}$. Thus $\mathcal{A}\subseteq \mathcal {ZM}\vee \mathcal {LRB}$. Consequently $\mathcal {A}=\mathcal {ZM}\vee \mathcal {LRB}$.

\medskip

\noindent
$\mathcal {A}=\mathcal{D}\vee \mathcal{LRB}$: Using equations $\mathcal{D}=\mathcal{ZM}\vee \mathcal{SL}$,  $\mathcal {A}=\mathcal {ZM}\vee \mathcal {LRB}$, and the fact that $\mathcal{LS}\subseteq \mathcal{LRB}$, we have
\[\mathcal{D}\vee\mathcal{LRB}=\mathcal{ZM}\vee \mathcal{SL}\vee \mathcal{LRB}
=\mathcal{ZM}\vee \mathcal {LRB}=\mathcal{A}.\]

\medskip

\noindent
$\mathcal {A}=\mathcal{C}\vee \mathcal{LRB}$: Using equations $\mathcal{C}=\mathcal{B}\vee \mathcal{D}$, $\mathcal {B}=\mathcal {ZM}\vee \mathcal {LZ}$, $\mathcal{D}=\mathcal{ZM}\vee \mathcal{SL}$, $\mathcal {A}=\mathcal {ZM}\vee \mathcal {LRB}$, $\mathcal{LNB}=\mathcal{LZ}\vee \mathcal{SL}$, we get
\[\mathcal{C}\vee\mathcal{LRB}=\mathcal{B}\vee\mathcal{D}\vee\mathcal{LRB}=
\mathcal{LZ}\vee\mathcal{ZM}\vee \mathcal{ZM}\vee\mathcal{SL}\vee\mathcal{LRB}=\]
\[=\mathcal{ZM}\vee\mathcal{LRB}\vee\mathcal{LZ}\vee\mathcal{SL}=\mathcal{A}\vee\mathcal{LNB}=\mathcal{A},\]
because $\mathcal{LNB}\subseteq \mathcal{A}$.

\medskip

\noindent
$\mathcal {A}=\mathcal{B}\vee \mathcal{LRB}$: Using equations $\mathcal {B}=\mathcal {ZM}\vee \mathcal {LZ}$ and $\mathcal {A}=\mathcal{ZM}\vee \mathcal{LRB}$, we get
\[\mathcal{B}\vee\mathcal{LRB}=\mathcal{LZ}\vee\mathcal{ZM}\vee\mathcal{LRB}=\mathcal{LZ}\vee\mathcal{A}=\mathcal{A},\]
because $\mathcal{LZ}\subseteq\mathcal{A}$.
\end{proof}

\medskip

\noindent
An element $a$ of a lattice $(L;\vee, \wedge)$ with a zero $0$ is said to be an \emph{atom} of $L$ if $0<a$ and $0\leq b<a$ implies $0=b$ for arbitrary $b\in L$.
An element $a$ of a lattice $(L;\vee, \wedge)$ is called a \emph{neutral element} \cite{Birkhoff} of $L$ if,
for all $x, y\in L$, the triple $\{a, x, y\}$ generates a distributive sublattice of $L$. It is known \cite{Gratzer} that an element $a$ of a lattice $L$ is a neutral element if and only if, for all $x, y\in L$,
$(a\wedge x)\vee (a\wedge y)\vee (x\wedge y)=(a\vee x)\wedge (a\vee y)\wedge (x\vee y)$.

\begin{theorem} The varieties $\mathcal {ZM}$, $\mathcal {LZ}$, $\mathcal {SL}$ are atoms, the varieties $\mathcal T$, $\mathcal {ZM}$, $\mathcal {SL}$, $\mathcal{D}=\mathcal {ZM}\vee \mathcal{SL}$, and $\mathcal{LLS}$ are neutral elements of the lattice ${\mathcal L}({\bf LLS})$.
\end{theorem}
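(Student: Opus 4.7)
I would split the argument into four parts: the atoms, the trivial neutrals $\mathcal{T}$ and $\mathcal{LLS}$, the substantive neutrals $\mathcal{ZM}$ and $\mathcal{SL}$, and finally $\mathcal{D}$.

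For the atoms I would handle the three cases in parallel: each of $\mathcal{ZM}$, $\mathcal{LZ}$, $\mathcal{SL}$ is generated by a subdirectly irreducible two-element semigroup (the two-element zero, left zero, and semilattice, respectively), and every non-trivial member of each variety contains the corresponding generator as a subsemigroup (take $\{0,a\}$ in a zero semigroup; any pair in a left zero semigroup; one of $\{a,b\}$, $\{a,ab\}$, $\{b,ab\}$ in a semilattice). Hence any non-trivial subvariety contains the generator and coincides with the full variety. Neutrality of $\mathcal{T}$ and $\mathcal{LLS}$ is free: substituting the bottom or top of the lattice into the defining identity
\[(a\wedge x)\vee(a\wedge y)\vee(x\wedge y) = (a\vee x)\wedge(a\vee y)\wedge(x\vee y)\]
collapses both sides to $x\wedge y$. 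Once $\mathcal{ZM}$ and $\mathcal{SL}$ are known to be neutral, the neutrality of $\mathcal{D}=\mathcal{ZM}\vee\mathcal{SL}$ follows from Birkhoff's classical theorem that the neutral elements of any lattice form a sublattice.

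The heart of the proof is the neutrality of $\mathcal{ZM}$ and $\mathcal{SL}$. My approach is to first establish two dichotomies for subvarieties $\mathcal{V}$ of $\mathcal{LLS}$: either $\mathcal{ZM}\subseteq\mathcal{V}$ or $\mathcal{V}$ satisfies $a^2=a$ (i.e.\ $\mathcal{V}\subseteq\mathcal{LRB}$); and either $\mathcal{SL}\subseteq\mathcal{V}$ or $\mathcal{V}$ satisfies $a^2b=a^2$. Each follows by identity analysis. For $\mathcal{ZM}$: any identity excluding the two-element zero semigroup has the form $u=x$ with $|u|\geq 2$, and substituting all other variables to $x$ together with Lemma~\ref{lem0} forces $x^2=x$. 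For $\mathcal{SL}$: any identity excluding the two-element semilattice has $V(u)\neq V(v)$; picking $y\in V(u)\setminus V(v)$ and substituting all other variables to a single $a$ (with $y\mapsto b$) reduces to one of the finitely many two-generator normal forms listed in Example~\ref{ex1}, and a case-by-case check shows each possibility forces $a^2b=a^2$ in $\mathcal{V}$ (using Lemma~\ref{lem0} and Lemma~\ref{lem3} to reduce intermediate words).

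With the dichotomies in hand, the meet-distributive law for each atom follows from the prime-join property: if $\mathcal{ZM}\subseteq\mathcal{X}\vee\mathcal{Y}$ but $\mathcal{ZM}\not\subseteq\mathcal{X}$ and $\mathcal{ZM}\not\subseteq\mathcal{Y}$, then both $\mathcal{X}$ and $\mathcal{Y}$ satisfy $a^2=a$, hence so does $\mathcal{X}\vee\mathcal{Y}$, a contradiction; the argument for $\mathcal{SL}$ is identical with $a^2b=a^2$ in place of $a^2=a$. For the join-distributive (dual) law I would split into cases on whether $\mathcal{X}$ and $\mathcal{Y}$ contain the atom, using the structural decomposition in Theorem~\ref{thm1} to identify members of $\mathcal{ZM}\vee\mathcal{V}$ as subdirect products of elements of $\mathcal{V}$ with zero semigroups, and Theorem~\ref{thm6a} analogously for the $\mathcal{SL}$ case. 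I expect the dual distributive step, and in particular verifying the required modular identities when both $\mathcal{X}$ and $\mathcal{Y}$ lie in the "anti-atom" region, to be the main technical obstacle.
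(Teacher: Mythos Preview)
Your route is genuinely different from the paper's. The paper's proof is entirely by citation: Kalicki--Scott for the fact that $\mathcal{ZM}$, $\mathcal{LZ}$, $\mathcal{SL}$ are atoms in the lattice of \emph{all} semigroup varieties, and Volkov's Proposition~4.1 for the neutrality of $\mathcal{ZM}$, $\mathcal{SL}$, $\mathcal{D}$ there; both properties then descend automatically to the interval sublattice $[\mathcal{T},\mathcal{LLS}]=\mathcal{L}(\mathbf{LLS})$ (an atom above the same zero stays an atom, and if every triple $\{a,x,y\}$ generates a distributive sublattice in the big lattice it certainly does so in the small one). Your self-contained argument via the two dichotomies ``$\mathcal{ZM}\subseteq\mathcal V$ or $\mathcal V\models a^2=a$'' and ``$\mathcal{SL}\subseteq\mathcal V$ or $\mathcal V\models a^2b=a^2$'' is correct and is exactly the structural reason behind Volkov's result specialised to this lattice; it is more work but more informative, and it ties in nicely with Theorem~\ref{thm5}.

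There is, however, a real gap in the neutrality part. You reduce neutrality of $\mathcal{ZM}$ and $\mathcal{SL}$ to proving both $a\wedge(x\vee y)=(a\wedge x)\vee(a\wedge y)$ and its dual, but distributivity together with dual distributivity of an element does \emph{not} imply neutrality in general: one also needs the cancellation implication $a\wedge x=a\wedge y,\ a\vee x=a\vee y\Rightarrow x=y$ (equivalently, one must verify Gr\"atzer's median identity directly). For an atom $a=\mathcal{ZM}$ this cancellation, via your first dichotomy, amounts to showing that the map $\mathcal V\mapsto\mathcal{ZM}\vee\mathcal V$ is injective on subvarieties of $\mathcal{LRB}$; for $a=\mathcal{SL}$ it amounts to injectivity of $\mathcal V\mapsto\mathcal{SL}\vee\mathcal V$ on the subvarieties satisfying $a^2b=a^2$. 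You gesture at this with Theorem~\ref{thm1} and Theorem~\ref{thm6a}, and you explicitly flag the dual-distributive step as the ``main technical obstacle'', but as written the proposal does not close it. If you want a self-contained proof you must either carry out that injectivity argument (this is essentially re-proving the relevant piece of Volkov), or switch to verifying the median identity outright using the dichotomies; otherwise the cleanest fix is simply to quote Volkov as the paper does.
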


\begin{proof} By \cite[Theorem 2.6]{Kalicki}, $\mathcal {ZM}$, $\mathcal{LZ}$ and $\mathcal {SL}$ are atoms of the lattice of all semigroup varieties (see also the Theorem of Section IV of \cite{Evans}). Thus they are atoms of the lattice ${\mathcal L}({\bf LLS})$. It is clear that the varieties $\mathcal{T}$ and $\mathcal {LLS}$ are neutral elements of the lattice ${\mathcal L}({\bf LLS})$. By \cite[Proposition 4.1]{Volkov}, varieties $\mathcal {ZM}$, $\mathcal {SL}$, and $\mathcal{D}=\mathcal {ZM}\vee \mathcal {SL}$ are neutral elements of the lattice of all semigroup varieties. Thus they are neutral elements of the lattice ${\mathcal L}({\bf LLS})$.
\end{proof}

\noindent
The previous results on the lattice ${\mathcal L}({\bf LLS})$ is illustrated by Fig~\ref{Figure 1}.
\begin{figure}[htbp]
\begin{center}

\tikz{ \draw (0,1) node [left] {$\mathcal{{LZ}}$} node {$\bullet$} -- (2,0) node [below] {$\mathcal{{T}}$} node {$\bullet$} (2,0) -- (2,1.2) node [below right] {$\mathcal{{ZM}}$} node {$\bullet$} (2,0) -- (4,1) node [right] {$\mathcal{{SL}}$} node {$\bullet$} (0,1) -- (0,2.2) node [left] {$\mathcal{B}$} node {$\bullet$} (4,1) -- (4,2) node [right] {$\mathcal{D}$} node {$\bullet$} (4,2) -- (2,3.2) node [above left] {$\mathcal{C}$} node {$\bullet$}
(2,1.2) -- (4,2)  (0,1) -- (2,2) node [left] {$\mathcal{LNB}$} node {$\bullet$} (2,2) -- (4,3) node [right] {$\mathcal{LRB}$} node {$\bullet$} (2,2) -- (4,1) (3,4.2) -- (3,5.2) node [above] {$\mathcal{LLS}$} (0,2.2) -- (2,1.2) (2,3.2) -- (0,2.2) (3,4.2) -- (2,3.2)  (3,4.2)  node [right] {$\mathcal{A}$} node {$\bullet$} -- (4,3) (3,4.2) -- (2,1.2) (2,3.2) -- (2,2);}
\caption{}\label{Figure 1}
\end{center}
\end{figure}
The varieties on the lower level of the diagram are the left regular band varieties (see \cite[Figure 4]{Evans}). Their number is finite. However, the question of how many elements are in the lattice of all left legal semigroup varieties has not yet been answered. For example, the semigroup $S$ defined in Example~\ref{ex3a} is a left legal semigroup, but it is not included in any of the proper subvarieties of $\mathcal{LLS}$ on Fig. 1. The following problem needs a solution.

\bigskip
\noindent
{\bf OPEN PROBLEM:} Is the lattice of all left legal semigroup varieties finite?

\bigskip

\end{document}